\documentclass[graybox]{SNmult}

\usepackage[utf8]{inputenc}
\usepackage{type1cm}        %
\usepackage{makeidx}         %
\usepackage{graphicx}        %
\usepackage{multicol}        %
\usepackage[bottom]{footmisc}%
\usepackage[numbers]{natbib}

\usepackage{newtxtext}       %
\usepackage[varvw]{newtxmath}       %

\makeindex             %

\usepackage{mathtools}
\usepackage{amsmath}
\DeclareMathAlphabet{\mathpzc}{OT1}{pzc}{m}{it}
\usepackage{calc} %

\usepackage{siunitx}
\usepackage[hidelinks]{hyperref}
\usepackage{cleveref}
\usepackage{xcolor}
\usepackage{xargs}
\usepackage{soul}
\usepackage{subcaption}
\usepackage{nicefrac}
\usepackage{macros}
\usepackage[super]{nth}
\usepackage{csquotes}
\usepackage{dsfont}

\usepackage[mode=buildnew,subpreambles=true]{standalone}
\usepackage{shellesc}
\usepackage{tikz,tikzscale}
\usetikzlibrary{arrows,calc,fit,patterns,positioning,spy,backgrounds,decorations.fractals,shapes.misc}
\tikzset{boximg/.style={remember picture,black,thick,draw,inner sep=0pt,outer sep=0pt}}
\usepackage{pgf}
\usepackage{pgfplots}
\usepgfplotslibrary{statistics}
\usepgfplotslibrary{fillbetween}
\usepgfplotslibrary{colormaps}
\usepgfplotslibrary{groupplots,dateplot}
\pgfplotsset{compat=1.16}
\pgfplotsset{lua backend=true}

\pgfplotsset{
  	/pgfplots/colormap={redtoblue}{
			rgb=(
			0.12548999999999999,
			0.26274500000000001,
			0.50196099999999999)
			rgb=(
			0.57254899999999997,
			0.819608,
			0.87843099999999996)
			rgb=(
			0.94117600000000001,
			0.98431400000000002,
			0.98823499999999997)
			rgb=(
			0.76078400000000002,
			0.29411799999999999,
			0.21176500000000001)
			rgb=(
			0.54901999999999995,
			0.066667000000000004,
			0.098039000000000001)
    }
}

\begin{document}
\title*{h-Adaptive FV Subcell Shock-Capturing for DGSEM on Heterogeneous Curvilinear Meshes}
\titlerunning{Shock-Capturing for DGSEM on Heterogeneous Curvilinear Meshes}
\author{Anna Schwarz$^{\dag,\ast}$\orcidID{0000-0002-3181-8230}, Jens Keim$^{\dag,\ast}$\orcidID{0000-0002-2338-1497}, Christian Rohde$^\ddag$\orcidID{0000-0001-9183-5094} and Andrea Beck$^\dag$\orcidID{0000-0003-3634-7447}}
\institute{$^\ast$ J. Keim and A. Schwarz share first authorship. \at $^\dag$ Institute of Aerodynamics and Gas Dynamics, University of Stuttgart, Wankelstr. 3, 70563 Stuttgart, \email{schwarz/keim/beck@iag.uni-stuttgart.de}
\and $^\ddag$ Institute of Applied Analysis and Numerical Simulation, University of Stuttgart, Pfaffenwaldring 57, 70569
Stuttgart, \email{crohde@mathematik.uni-stuttgart.de}}

\maketitle
\abstract*{
High-order methods offer superior dispersion and dissipation properties compared to low-order schemes but require robust stabilization for discontinuities. To ensure stability, local artificial viscosity is common, but often degrades sub-element resolution. Conversely, subcell resolution preserving limiting strategies such as the finite volume subcell method are typically restricted to uniform topologies, such as purely hexahedral or simplex meshes, or linear elements. This leaves a significant gap in treating the hybrid-element topologies necessary for complex engineering geometries. To bridge this gap, we introduce a robust shock-capturing approach for the discontinuous Galerkin spectral element method on mixed curvilinear meshes containing hexahedral, prismatic, tetrahedral, and pyramid elements. Non-hexahedral elements are handled via collapsed coordinate transformations. The proposed method utilizes an $h$-adaptive finite volume subcell scheme with an arbitrary subcell resolution up to $2\ppn+1$. Special care is taken to ensure conforming subcell distributions across different element types. Crucially, discrete conservation is proven theoretically and verified numerically, alongside validations for high-order convergence and robust shock capturing. Finally, the method's applicability to complex configurations is demonstrated through a simulation of the flow around a NACA 0012 airfoil.
\keywords{high-order DG $\cdot$ finite volume $\cdot$ shock capturing}}

\abstract{
High-order methods offer superior dispersion and dissipation properties compared to low-order schemes but require robust stabilization for discontinuities. To ensure stability, local artificial viscosity is common, but often degrades sub-element resolution. Conversely, subcell resolution preserving limiting strategies such as the finite volume subcell method are typically restricted to uniform topologies, such as purely hexahedral or simplex meshes, or linear elements. This leaves a significant gap in treating the hybrid-element topologies necessary for complex engineering geometries. To bridge this gap, we introduce a robust shock-capturing approach for the discontinuous Galerkin spectral element method on mixed curvilinear meshes containing hexahedral, prismatic, tetrahedral, and pyramid elements. Non-hexahedral elements are handled via collapsed coordinate transformations. The proposed method utilizes an $h$-adaptive finite volume subcell scheme with an arbitrary subcell resolution up to $2\ppn+1$. Special care is taken to ensure conforming subcell distributions across different element types. Crucially, discrete conservation is proven theoretically and verified numerically, alongside validations for high-order convergence and robust shock capturing. Finally, the method's applicability to complex configurations is demonstrated through a simulation of the flow around a NACA 0012 airfoil.
\keywords{high-order DG $\cdot$ finite volume $\cdot$ shock capturing}}

\section{Introduction}
\label{sec:introduction}

High-order numerical methods gained significant interest due to their inherent geometric flexibility and superior dispersion and
dissipation characteristics compared to lower-order schemes \cite{Ainsworth2004a}.
While these methods provide exceptional accuracy in smooth, multiscale regions of a flow, they are susceptible to Gibbs oscillations
at discontinuities.
This is particularly problematic for non-linear hyperbolic conservation laws, which serve as a foundation for modeling diverse physical processes \cite{Dafermos2010}.
An intrinsic property of such equations is the potential for classical solutions to break down, leading to the formation of discontinuities even when the initial data are smooth.
In the context of the Euler equations, these features typically manifest as linear contact discontinuities or non-linear shock waves, both of which necessitate robust stabilization to maintain numerical integrity.

Numerical stabilization in the presence of discontinuities remains a fundamental challenge for high-order schemes.
First, the effective handling of discontinuities is predicated on accurate shock detection, a non-trivial task addressed via parameter-dependent indicator functions.
These are broadly categorized into a priori approaches, derived from physical heuristics \cite{Jameson1981}, modal decay estimates
\cite{Huerta2012, Klockner2011}, or image-detection algorithms \cite{Beck2020}, and a posteriori strategies, such as the MOOD limiter \cite{Clain2011, Dumbser2014}, invariant domain preserving schemes~\cite{Guermond2016} or data-driven approaches~\cite{Schwarz2022a}.
While MOOD-based methods guarantee less oscillatory solutions by recomputing segments that violate admissibility criteria, they often incur higher computational costs.
Second, the necessity for stabilization is rooted in Godunov’s Theorem \cite{Godunov1959}, which states that linear monotone schemes are limited to first-order accuracy, thereby requiring non-linear limiting to resolve shocks without spurious oscillations.
To this end, methods utilizing total variation diminishing (TVD) limiting strategies \cite{Leveque2002, Toro2009} and (weighted) essentially non-oscillatory ((W)ENO) reconstructions \cite{HartenENO, weno} have become standard in the literature.
For the class of methods including flux reconstruction, spectral difference, and discontinuous Galerkin (DG), which constitute
the primary focus of this work, a common stabilization ansatz involves the local introduction of artificial viscosity.
This regularization can be realized through various techniques, such as local solution filtering \cite{Hesthaven2008}, the
application of a local diffusion operator \cite{Jameson1981,persson2006}, flux correction methods \cite{Kuzmin2005,Vilar2019a}, and hybrid subcell strategies.
These hybrid approaches, such as convex blending \cite{Hennemann2021} or $h$-refined low-order subcell solutions based on TVD \cite{Sonntag2017, Mossier2022} or WENO \cite{Dumbser2008b, Dumbser2014, Dumbser2016} operators, represent a robust path toward maintaining high-order precision alongside shock stability.

While local diffusion operators are a common strategy for stabilization, they inherently degrade sub-element resolution, often leading to excessive numerical dissipation.
More sophisticated approaches, such as finite volume (FV) subcell methods, offer superior accuracy by preserving sub-grid features.
However, their application in the current literature remains largely bifurcated between purely hexahedral or purely simplex
discretizations. While recent advancements have extended these techniques to Voronoi or general polygonal meshes~\cite{Busto2020},
such implementations are predominantly restricted to the two-dimensional space and linear elements. Furthermore, they often necessitate complex quadrature rules or the internal subdivision of cells into simplex elements, which can significantly increase computational overhead and implementation complexity.
This leaves a significant gap regarding the treatment of hybrid-element topologies, which are essential for resolving complex engineering geometries where mixed-mesh flexibility is required \cite{Shepherd2008}.

To address this limitation, this paper presents a robust and efficient shock-capturing approach based on an $h$-adaptive FV subcell scheme for the discontinuous Galerkin spectral element method (DGSEM).
The core novelty of this work lies in achieving provably conservative shock capturing across mixed, curvilinear meshes composed of hexahedral, prismatic, tetrahedral, and pyramid elements, thereby addressing a long-standing challenge in high-order methods for complex industrial geometries.
The DGSEM is highly efficient on modern hardware~\cite{Blind2024,Kempf2024,Schwarz2025,Kopper2025a}, offering high-order accuracy in smooth regions along with favorable dispersion and dissipation properties. While traditionally leveraging tensor-product operators on hexahedral elements \cite{Orszag1979}, the method has been extended to non-hexahedral and curved elements via collapsed coordinate transformations, such as the Duffy transformation \cite{Chan2016, Duffy1982, Montoya2024a}.
Our approach builds upon the a priori FV subcell limiting proposed by \cite{Sonntag2017}, while ensuring that the treatment of interfaces between differing element types remains conforming.
Specifically, we extend the $h$-adaptive subcell resolution strategy of \cite{Dumbser2014,Mossier2022,Mossier2026} to curvilinear hybrid meshes and provide the analytical and numerical proof of its discrete conservation across these mixed interfaces.
With this, the scheme ensures strict conservation, high-order accuracy, and stability across the diverse element types and hybrid interfaces required for complex engineering problems.

The outline of this paper is as follows: First, the governing equations and their numerical treatment, including the shock capturing
approach, are outlined in~\cref{sec:methods}. This is followed by a thorough validation and testing of the resulting
scheme in~\cref{sec:validation}. The paper is finalized by a brief conclusion in~\cref{sec:conclusion}.

\section{Numerical Methods}
\label{sec:methods}
In the following, the numerical treatment of the governing equations is discussed, including the specific formulations for the DGSEM, the identification of troubled cells, and the subsequent shock-capturing procedure.

\subsection{Notation}
To maintain a clear mathematical hierarchy between the DG element and the FV subcell level, a consistent index convention is adapted throughout this work.
Volume-dependent quantities bound to a DG element $\iel$ are designated by the superscript $(\cdot)^{(\iel)}$, whereas data belonging to a face $\ifa$ of that element are denoted by the subscript $(\cdot)_{\ifa}$.
When moving to the subcell level, quantities belonging to an individual FV subcell $\ielFV$ inside the DG element $\iel$ are identified via the superscript $(\cdot)^{(\ielFV)}$.
Finally, to prevent cluttered expressions when evaluating interface metrics and operators, surface data isolated on a local subcell face $\ifaFV$ are tracked via $(\cdot)_\ifaFV$.

\subsection{Governing equations}

The governing equations considered in this work are the compressible Navier--Stokes--Fourier equations, written as
\begin{align}
  \lr{\cons}_t + \gradientX \cdot \fphys(\cons,\nabla \cons) = \Null, \hspace{0.5cm} \ \fphys(\cons,\nabla \cons) = \Fc - \Fv,
  \label{eq:theory:euler}
\end{align}
where $\cons=\arr{\rho, \rho \vel, \rho e} \in \R^{\nvar}$, $\nvar=d+2$, is the vector of conserved variables, consisting of the density $\rho$, the velocity vector $\vel=\arr{\vel[1],...,\vel[d]} \in \R^d$ and the total energy $e$ per unit mass, $d$ is the number of dimensions, and $\fphys \in \R^{\nvar\times d}$ are the physical fluxes, comprised of the convective flux $\Fc$ and the viscous flux $\Fv$, given as
\begin{align}
  \Fc =& \arr{
    \makebox[\widthof{$\rho \vel$}][l]{$\rho \vel$},
    \makebox[\widthof{$\rho \vel \otimes \vel + \p \mathbf{I}$}][l]{$\rho \vel \otimes \vel + \p \mathbf{I}$},
    \makebox[\widthof{$\stress \cdot \vel + \conductivity \nabla T$}][l]{$(\rho e + \p) \vel$}
    }, \\
  \Fv =& \arr{
    \makebox[\widthof{$\rho \vel$}][l]{$0$},
    \makebox[\widthof{$\rho \vel \otimes \vel + \p \mathbf{I}$}][l]{$\stress$},
    \makebox[\widthof{$\stress \cdot \vel + \conductivity \nabla T$}][l]{$\stress \cdot \vel + \conductivity \nabla T$}
    },
\end{align}
with the unit tensor $\mathbf{I} \in \R^{d \times d}$, the temperature $T$, the thermal conductivity $\conductivity$, and the pressure for a perfect gas $p=(\gamma-1)(\rho e- 0.5 \rho (\vel \cdot \vel))$ with $\gamma=1.4$.
Following Stokes' hypothesis, which assumes that the bulk viscosity is zero, the viscous stress tensor reduces to $\stress =
\dynvisc (\lr{\nabla \vel}^\transpose + \nabla \vel - \frac{2}{3} \lr{\nabla \cdot \vel}\mathbf{I}) \in \R^{d \times d}$ for a Newtonian fluid, where $\dynvisc$ denotes the dynamic viscosity.
The heat flux is modeled according to Fourier's law with $\conductivity = \frac{c_p \dynvisc}{\Pr}$, the Prandtl number $\Pr$, and the specific heat at constant pressure $c_p$ of ambient air.

\subsection{Discontinuous Galerkin Spectral Element Method}
Following the method of lines approach, the conservation equations are discretized in space by the discontinuous Galerkin spectral element method.
The computational domain $\Omega \subset \Rd$ is composed of $\nel$ non-overlapping, conforming curvilinear polytopal elements $\{\Omega^{(\iel)}\}_{\iel \in \{1:\nel\}}$.
Curvilinear elements are approximated by normalized orthonormal Legendre or Proriol--Koornwinder--Dubiner (PKD) polynomials of degree $\ppngeo$~\cite{Keim2026}.
The boundary of each computational element $\partial \Omega^{(\iel)}$ is partitioned into faces $\{\Gamma^{\ifa}\}_{\smash{\ifa \in
  \{1:\Nf\}}}$, $\Gamma^{\ifa} \subset \partial \Omega^{(\iel)}$ with an outward pointing unit normal vector $\normalvec$, and $\Nf$ denotes the number of element faces of an element $(\iel)$.
Each element is transformed to the polytopal reference space by the inverse of the mapping $\mapping^{(\iel)} : \refraumcol \to \Omega^{(\iel)}, \refpos
\mapsto \mathbf{x}$ from the
reference element $\refpos \in \refraumcol$ to the physical space $\mathbf{x} \in \Omega$, with the corresponding Jacobian matrix
$\Jm^{(\iel)} = \lr{\gradientXI \mapping^{(\iel)}}$. The diagonal matrix of its determinant is defined as $\diagJ^{(\iel)}=\diag(\det\Jm^{(\iel)})$.
Subsequently, each polytopal reference element is mapped to the unit hexahedral $\refraum$ via the inverse of the mapping
$\mappingcol : \refraum = [-1,1]^d \to \refraumcol, \refposquad \mapsto \refpos$ and projected onto the $L_2$-space comprised of polynomials
$\testfunc \in \P(\refraum,\R^{\nvar})$ of order $\ppn$, which are chosen to be the tensor-product of 1D Lagrange polynomials
$\testfunc_{ijk}=\lagrange_i(\refposquad[1])\lagrange_j(\refposquad[2])\lagrange_k(\refposquad[3])$ with $i,j,k=0,\ldots,\ppn$.
The Jacobian matrix of this mapping is defined as $\Jmref= \lr{\gradientXI \mappingcol}$ with $\diagJref=\diag(\det\Jmref)$.
Exploiting the Galerkin property, the nodal solution in the reference element $\refraum$ is approximated by 1D Lagrange polynomials of degree $\ppn$.

The semi-discrete weak form is obtained using Legendre--Gauss quadrature on $\nq$ Legendre--Gauss nodes, where the interpolation and
integration nodes are collocated, leading to
\begin{align}
  \Mm \diagJref \diagJ^{(\iel)} \cons_t^{(\iel)} = & \ \mathcal{R}(u) \nonumber \\
                              \mathcal{R}(u) =& \sum_{p=1}^{d} \Dm^\transpose \Mm \fphysref^{(\iel,p)}(\cons)
                                              - \sum_{\ifa=1}^{\Nf} \Vm_{\ifa}^\transpose \Wm \diagJref_\ifa \diagJ^{(\iel)}_\ifa f^\ast(\cons_{\ifa}^+ , \cons_{\ifa}^-; \normalvec_{\ifa}),
  \label{eq:dgsem:convective}
\end{align}
with the element-local vector of degrees of freedom $\cons^{(\iel)}$, the diagonal mass matrix $\Mm$, comprised of the volume quadrature weights, the polynomial derivative matrix $\Dm$, and the surface quadrature weights $\Wm$.
The outward-pointing physical normal vector $\normalvec \in \Rd$ is computed via Nanson's formula
\begin{align}
  \abs{\adjJ^{(\iel)} \normalvecref_\ifa} \normalvec_\ifa = \adjJ^{(\iel)} \normalvecref_\ifa    \label{eq:nanson}
\end{align}
with the transpose of the adjoint of the Jacobian $\adjJ^{(\iel)} = (\adj {\Jm^{(\iel)}})^\transpose$, the surface element $\J^{(\iel)}_\ifa = \abs{\adjJ^{(\iel)} \normalvecref_\ifa}$, $\smash{\diagJ^{(\iel)}_\ifa = \diag(\J^{(\iel)}_\ifa)}$, and the unit normal vector of the corresponding side in the polytopal reference space $\normalvecref$.
The contravariant convective fluxes in the volume are denoted by $\fphysref^{(\iel)}$.
The numerical flux is denoted by $f^\ast$ and, in this work, approximated via Roe's numerical flux with the entropy fix by~\cite{Harten1983b} or by
the Rusanov flux function.
The approximate solutions on the left $\cons_\ifa^{+}$ and right $\cons_\ifa^{-}$ of a face $\ifa$ are obtained via 1D
  volume operations, owing to the tensor-product structure of the interpolation, resulting in
\begin{align}
  {\cons}_{\ifa}^+ = \Vm_{\ifa} {\cons}^{(\iel),+}, ~ (\Vm_{\ifa})_{ij} = \lagrange_j(\refposquad_{\ifa,i}), %
\end{align}
with the quadrature nodes on the respective facets $\refposquad_{\ifa}$.

In this work, the BR1 (lifting) scheme~\cite{Bassi1997} is utilized to approximate the gradients of $\prim=\cons^{\text{prim}}$ in the
Navier--Stokes--Fourier equations, leading to
\begin{align}
  \Mm \diagJref \diagJ^{(\iel)} \cons_t^{(\iel)} =& \ \mathcal{R}(u) \\
                              - & \sum\limits_{p=1}^{d} \Dm^\transpose \Mm \fphysref_v^{(\iel,p)}(\cons, \mathbf{g}) \nonumber
                              + \sum_{\ifa=1}^{\Nf} \Vm_{\ifa}^\transpose \Wm \diagJref_{\ifa}
                                \diagJ^{(\iel)}_\ifa \avg{\fphys_v(\cons_\ifa,\nabla \prim_\ifa) \cdot \normalvec_\ifa}, \nonumber
  \label{eq:dgsem:viscous}
\end{align}
with the lifted gradients of the primitive variables $\mathbf{g} = \gradientX \prim$, the contravariant viscous fluxes $\fphysref_v$ in the volume, and $\avg{\fphys_v(\cons,\nabla \prim) \cdot \normalvec}$ are the average viscous fluxes at an element interface.
The semi-discrete DGSEM is advanced in time by a low-storage explicit high-order accurate Runge--Kutta method using the modal polynomial coefficients instead of the nodal degrees of freedom to avoid the excessive time step restriction due to the collapsing~\cite{Montoya2024b}.
Further details on the scheme can be found in~\cite{Keim2026}.

\subsection{\label{sec:methods:shock_caputring}Shock Capturing}

In this work, the shock capturing is based on a switching of troubled DG cells to a second-order $h$-adaptive total variation
diminishing FV subcell operator.
The FV subcell scheme is achieved by partitioning the DG reference element $\refraumcol$ into a set of $N_\ielFV$ subcells, $\{E_\ielFV \subset \refraumcol | \ielFV=1,...,N_\ielFV\}$,
of the same element type, if possible, allowing for an h-adaptive FV subcell scheme.
While the subcell resolution, $N_\ielFV$, may be selected arbitrarily, a minimum of $N_\ielFV \geq (\ppn+1)^d$ subcells is necessary to maintain robust and accurate shock capturing~\cite{Dumbser2016}.
To avoid a time-step restriction more stringent than that of the DG method, the upper bound for an equidistant distribution is typically constrained to $N_\ielFV \leq (2\ppn+1)^d$ when employing Gauss--Legendre nodes.
As such, the subcell grid resolution $\FVppn$, representing the number of 1D subcell subdivisions along an element edge, is bounded by $\ppn+1 \leq \FVppn \leq 2\ppn+1$.

In the following, the FV subcell connectivity for all considered element types is established. Subsequently, the conservative coupling between the FV subcells and the DG elements is outlined, culminating in the formulation of the complete hybrid scheme. Finally, the troubled-cell detection algorithm utilized to dynamically switch between the two discretization operators is briefly presented.

\subsubsection{Finite Volume Subcell Connectivity}
The set $S_{i,j,k}$ defines the unique coordinates of the corner nodes $\refpos$ that bound a single FV subcell inside a reference element $\refraumcol$.
For a hexahedral element, the corner node connectivity of a subcell at index $(i,j,k)$ is explicitly given by
\begin{align}
  S^\text{hexa}_{i,j,k} = \{\refpos_{i,j,k},\refpos_{i+1,j,k},\refpos_{i,j+1,k},\refpos_{i+1,j+1,k},\refpos_{i,j,k+1},\refpos_{i+1,j,k+1},\refpos_{i,j+1,k+1},\refpos_{i+1,j+1,k+1}\}
\end{align}
with $i,j,k = 0,...,\FVppn$ (the corresponding 2D subgrid elements for a quadrilateral element are retrieved by dropping the last four entries).
For the prism, the corner node connectivity of the two subcells at index $(i,j,k)$ is written as
\begin{align}
  S^{\text{prism},U}_{i,j,k} =& \{\refpos_{i,j,k},\refpos_{i+1,j,k},\refpos_{i,j+1,k},\refpos_{i,j,k+1},\refpos_{i+1,j,k+1},\refpos_{i,j+1,k+1}\},\nonumber \\
  S^{\text{prism},L}_{i,j,k} =&
  \{\refpos_{i+1,j+1,k},\refpos_{i+1,j,k},\refpos_{i,j+1,k},\refpos_{i+1,j+1,k+1},\refpos_{i+1,j,k+1},\refpos_{i,j+1,k+1}\},
\end{align}
with $0 \leq k \leq \FVppn$, $0 \leq j \leq \FVppn$, and $0 \leq i \leq \FVppn - j$ (the corresponding 2D subgrid connectivity for a triangular element is retrieved by dropping the last three entries).
The upper and lower prisms are expressed as $S^{\text{prism},U}$ and $S^{\text{prism},L}$, respectively.
The subcell distribution in the tetrahedron is chosen according to~\cite{Dumbser2016}, which thoroughly reported the corresponding 3D connectivity.

Unlike the previous elements types, a standard uniform grid subdivision using the same element type cannot be straightforwardly applied to a pyramid without a severe loss of resolution or geometric conformity. While several topological splitting strategies exist, this work subdivides the reference pyramid into a combination of smaller pyramidal and intermediate tetrahedral subcells. This specific decomposition ensures a nearly equidistant subcell distribution while avoiding inter-element hanging nodes, thereby preserving strict grid conformity without introducing an additional timestep restriction.
Consequently, the definition of the 3D subgrid connectivity within a reference pyramid is inherently more complex. The subcells are organized into horizontal cross-sectional layers indexed by the vertical coordinate $k$. As the pyramid converges toward its apex, the horizontal grid dimensions shrink proportionally, bounding the directional indices by $0 \leq k \leq \FVppn$, $0 \leq j \leq \FVppn - k$, and $0 \leq i \leq \FVppn - k$. Depending on the local orientation and geometric placement within a given layer $k$, the connectivity sets for the subcells are defined as
\begin{align}
  S^{\text{pyra},U}_{i,j,k} =& \{\refpos_{i,j,k},\refpos_{i+1,j,k},\refpos_{i+1,j+1,k},\refpos_{i,j+1,k},\refpos_{i,j,k+1}\},\nonumber \\
  S^{\text{pyra},L}_{i,j,k} =& \{\refpos_{i,j,k+1},\refpos_{i+1,j,k+1},\refpos_{i+1,j+1,k+1},\refpos_{i,j+1,k+1},\refpos_{i+1,j,k}\},\nonumber \\
  S^{\text{pyra},I}_{i,j,k} =& \{\refpos_{i,j+1,k},\refpos_{i,j+1,k+1},\refpos_{i,j,k+1},\refpos_{i+1,j+1,k}\},\nonumber \\
  S^{\text{pyra},II}_{i,j,k} =& \{\refpos_{i+1,j,k},\refpos_{i+1,j,k+1},\refpos_{i,j,k+1},\refpos_{i+1,j+1,k}\},
\end{align}
where $S^{\text{pyra},U}$ and $S^{\text{pyra},L}$ represent the upward-pointing and downward-pointing pyramidal subcells, respectively, while $S^{\text{pyra},I}$ and $S^{\text{pyra},II}$ denote the intermediate tetrahedral subcells that close the gaps between them.

\begin{figure}
  \includegraphics[width=\linewidth]{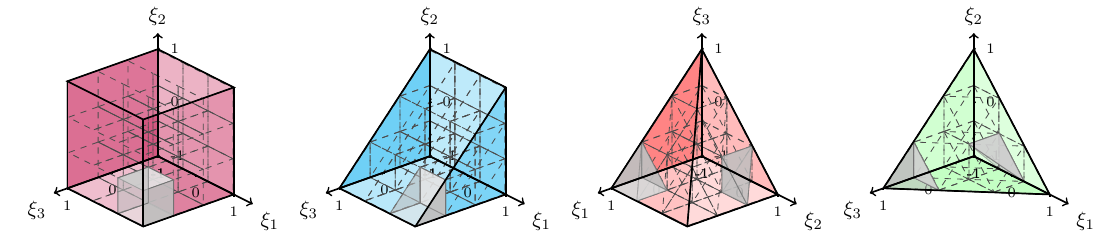}
  \caption{Exemplary FV subcell distribution in the polytopal reference space for the hexahedron, prism, pyramid, and tetrahedron
    using $\FVppn=3$ (from left to right).}
\end{figure}

\subsubsection{h-Adaptive Finite Volume Discretization}
The second-order $h$-adaptive FV subcell scheme on curvilinear mixed elements is obtained by an $L_2$ projection of the governing equations
onto the space of piecewise constant test functions. The semi-discrete scheme in one subcell $\ielFV$ is written as
\begin{align}
  \diagJ^{(\ielFV)} \cons^{(\ielFV)}_t
  = - \frac{1}{|E_{\ielFV}|} \sum_{\ifaFV=1}^{N_\ifaFV} \diagJ^{(\ielFV)}_\ifaFV
  \left(f^\ast(\cons_\ifaFV^{+},\cons_\ifaFV^{-};\normalvec_\ifaFV)
    - \fphys_v(\cons_\ifaFV,\nabla \prim_\ifaFV) \cdot \normalvec_\ifaFV \right),
  \label{eq:FV_scheme}
\end{align}
where $|E_\ifaFV|$ represents the reference volume of the $\ifaFV$-th subcell.
The corresponding integral mean value is denoted as $\cons^{(\ielFV)}$, the integral mean of the determinant of the Jacobian of this subcell is $\diagJ^{(\ielFV)}$, the surface element of one FV subcell face is $\diagJ^{(\ielFV)}_\ifaFV$, and $N_\ifaFV$ is the number of FV subcell sides for element $\ielFV$. Both Jacobians have to be defined such that the coupling between the FV subcells and the DG scheme is conservative, which will be thoroughly discussed in~\cref{sec:FVcoupling}.
The conservative variables left and right to an FV subcell face, $\cons_\ifaFV^+$ and $\cons_\ifaFV^-$, are obtained via interpolation based on the reconstructed slopes.
The viscous fluxes $\fphys_v(\cons_\ifaFV,\nabla \prim_\ifaFV)$ are computed using the averaged adjacent integral mean values on the respective subcell face.
The gradients $\nabla \prim_\ifaFV$ are approximated using the modified correction approach proposed by~\cite{Haselbacher1999} based on the unlimited least squares gradients.%

A total-variation diminishing second-order FV operator is obtained via a piecewise linear reconstruction, followed by a nonlinear
flux limiter such as the generalized minmod function \cite{Leer1977, Kurganov2007}. This procedure yields the following limited slope
\begin{align*}
  \phi_{lim}(r;\beta) = \min_{\ifaFV} \left(\text{minmod} \left(\beta r_\ifaFV, 0.5(1 + r_\ifaFV),\beta \right) \right),
\end{align*}
with $\beta \in [1,2]$, i.e., the final slope is limited using a minmod-like approach over all adjacent neighboring elements. The expression $r_\ifaFV$ is the ratio of the slope computed by the integral mean values between the left and right side of a subcell face $\ifaFV$ and the unlimited least squares gradient in the corresponding direction, $\nabla \prim^{(\ielFV)}$, given as
\begin{align*}
  r_\ifaFV = \frac{(\prim^{(\ielFV_\text{neigh})} - \prim^{(\ielFV)}) / \abs{\mathbf{x}^{(\ielFV_\text{neigh})} - \mathbf{x}^{(\ielFV)}}}{(\nabla
    \prim^{(\ielFV)} \cdot (\mathbf{x}_\ifaFV - \mathbf{x}^{(\ielFV)})) / \abs{\mathbf{x}_\ifaFV - \mathbf{x}^{(\ielFV)}}}.
\end{align*}
Recall that the primitive variables of the integral means are $\prim$.
The barycenter of a FV subcell face is denoted as $\mathbf{x}_\ifaFV$, the barycenter of the $\ielFV$-th FV subcell is $\mathbf{x}^{(\ielFV)}$, and $(\cdot)^{(\ielFV_\text{neigh})}$ denotes quantities from the corresponding adjacent FV subcell.
The authors are aware that a vertex based limiter offers enhanced robustness, albeit at an elevated computational expense~\cite{Yoon2008}.
Subsequently, the conservative variables at the subcell faces for one subcell element $\ielFV$ are obtained via linear interpolation of
the solution using the limited gradients.

\subsubsection{Coupling Between FV Subcells and DG Cells}
\label{sec:FVcoupling}
To guarantee a conservative switching between DG cells and FV subcells, strict integral conservation must be enforced over the element domain $\refraumcol$
\begin{align}
  \int_{\refraumcol} (\diagJ \cons)[\refpos] \ d\refpos = \sum_{\ielFV=1}^{N_\ielFV} \int_{E_\ielFV} (\diagJ \cons)[\refpos] \ d\refpos = \sum_{\ielFV=1}^{N_\ielFV} \diagJ^{(\ielFV)} \cons^{(\ielFV)} |E_\ielFV|.
  \label{eq:fvdg_integralcons_vol}
\end{align}
Subsequent application of Gaussian quadrature on Gauss--Legendre nodes to evaluate the second integral yields the discrete conservation condition
\begin{align}
  \sum_{\ielFV=1}^{N_\ielFV} \diagJ^{(\ielFV)} \cons^{(\ielFV)} |E_\ielFV| = \sum_{\ielFV=1}^{N_\ielFV} \PDGtoFV_\ielFV ~ \diagJ^{(\iel)}
  \cons^{(\iel)}.
  \label{eq:fvdg_coupling_vol}
\end{align}
The operator $\PDGtoFV_\ielFV$ emerges as the row-wise volume projection matrix from the DG space to the $\ielFV$-th subcell.
Due to the non-tensor-product layout of the subcells, the mapping between the DG and FV subcell representations is managed via an orthogonal modal space spanned by normalized orthonormal Legendre or PKD polynomials $\varphi$, see~\cite{Keim2026} for more details, by leveraging the polynomial interpolation matrix $\Vm(\refpos) = \varphi(\refpos)$ and the nodal-to-modal projection operator $\Pm= \mathcal{M}^{-1} \Vm^\transpose(\refpos) \Mm$, leading to
\begin{align}
  \PDGtoFV_\ielFV = \frac{1}{|E_\ielFV|} \mathbf{1}^\transpose (\Mm^{(\ielFV)} \Vm(\refpos^{(\ielFV)}) \Pm),
  \label{eq:PDGtoFV}
\end{align}
with $\mathbf{1} \in \R^{N_\ielFV}$ as the vector of all ones, and $\refpos^{(\ielFV)}$ denotes the quadrature nodes utilized in the $\ielFV$-th subcell.
The constituent terms are defined as follows.
The weight-adjusted mass matrix is denoted as $\mathcal{M}=\Vm^\transpose \widetilde{\diagJ} \Mm \Vm$, the projected Jacobian is $\widetilde{\diagJ}$, see~\cite{Keim2026}, and the diagonal matrix $\Mm^{(\ielFV)}$ is comprised of the volume quadrature weights in the $\ielFV$-th subcell.
From~\cref{eq:fvdg_coupling_vol}, the subcell determinant of the Jacobian is consistently retrieved via
\begin{align}
  \diagJ^{(\ielFV)} = \PDGtoFVJ_\ielFV \diagJ^{(\iel)},
  \label{eq:fvdg_coupling_metricsvol}
\end{align}
enforcing combined with~\cref{eq:fvdg_coupling_vol} conservation between the DG and FV subcell solutions for curvilinear meshes.
The consistent row-wise DG-to-FV geometric projection operator $\PDGtoFVJ_\ielFV$ shares the identical structure of~\cref{eq:PDGtoFV}, with the weight-adjusted mass matrix reducing to its unweighted form, yielding
\begin{align}
  \PDGtoFVJ_\ielFV = \frac{1}{|E_\ielFV|} \mathbf{1}^\transpose (\Mm^{(\ielFV)} \Vm(\refpos^{(\ielFV)}) \widetilde{\Pm}, \ \text{with} \ \widetilde{\Pm} = (\Vm^\transpose \Mm \Vm)^{-1} \Vm^\transpose(\refpos) \Mm.
\end{align}

The inverse problem, reconstructing a continuous DG polynomial from $N_\ielFV$ subcell integral mean values, yields an overdetermined system whenever the subcell resolution satisfies $\FVppn \geq \ppn+1$. Following \cite{Dumbser2016}, this is resolved via a constrained least-squares formulation subject to the integral conservation constraint in \cref{eq:fvdg_integralcons_vol}, resulting in the following row-wise FV-to-DG projection operator, given as
\begin{align}
  \PFVtoDG_\ielFV = \frac{1}{|E_\ielFV|} \Vm(\refpos) \Vm^+_\ielFV,
  \label{eq:PFVtoDG}
\end{align}
where the pseudo-inverse matrix $\Vm^+_\ielFV$ is defined via the row-integrated Vandermonde vector $\check{\mathbf{v}}^{(\ielFV)} = \mathbf{1}^\transpose \Mm^{(\ielFV)} \Vm(\refpos^{(\ielFV)})$ as
\begin{align}
  \Vm^+_\ielFV = \check{\mathbf{v}}^{(\ielFV)}(\check{\mathbf{v}}^{(\ielFV)})^\transpose)^{-1}\check{\mathbf{v}}^{(\ielFV)}.
\end{align}

To preserve positivity and boundedness, the numerical surface fluxes between adjacent DG and FV elements are evaluated exclusively on the low-order FV subgrid distribution, requiring a conservative mapping of the DG solution on the shared face to the FV subcell faces.
The surface projection matrices $\PDGtoFV_\ifaFV$ and $\PFVtoDG_\ifaFV$ are derived analogously to their volume counterparts by enforcing integral conservation across the shared element face boundary, leading to
\begin{align}
  \int_{\Gamma^\ifa} (\diagJ^{(\iel)}_\ifa \cons_\ifa)[\Gamma] \ d\Gamma = \sum_{\ifaFV=1}^{n_\ifaFV} \diagJ^{(\ielFV)}_\ifaFV \cons_\ifaFV |\Gamma_\ifaFV| = \sum_{\ifaFV=1}^{n_\ifaFV} \PDGtoFV_\ifaFV \diagJ^{(\iel)}_\ifa \cons_\ifa
  \label{eq:fvdg_coupling_face}
\end{align}
where $n_\ifaFV$ is the total number of FV subcell faces partitioning a single macroscopic DG face $\ifa$.
The row-wise operator $\PDGtoFV_\ifaFV$ emerges as the surface projection matrix from the high-order DG face nodes to the barycenter of the $\ifaFV$-th subcell face, given as
\begin{align}
  \PDGtoFV_\ifaFV = \frac{1}{|\Gamma_\ifaFV|} \mathbf{1}^\transpose (\Wm_\ifaFV \Vm(\refpos_\ifaFV) \Pm_\ifa),
  \label{eq:PDGtoFVFace}
\end{align}
where $\Pm_\ifa = \mathcal{M}_\ifa^{-1} \Vm^\transpose(\refpos_\ifa) \Wm$ is the nodal-to-modal face operator.
Here, $\Vm = \varphi^{(d-1)}(\refpos_\ifa)$ denotes the Vandermonde matrix constructed using Legendre or PKD polynomial basis functions of dimension $d-1$, $\refpos_\ifaFV$ denotes the quadrature nodes on the $\ifaFV$-th subcell face, and $\Wm_\ifaFV$ contains the associated surface quadrature weights.
The consistent reference area of the subcell face is defined as $|\Gamma_\ifaFV|$.
The weight-adjusted mass matrix on the surface is denoted as $\smash{\mathcal{M}_\ifa=\Vm(\refpos_\ifa)^\transpose \widetilde{\diagJ}_\ifa^{(\iel)} \Wm \Vm(\refpos_\ifa)}$, where $\smash{\widetilde{\diagJ}_\ifa^{(\iel)}}$ represents the projected surface element following~\cite{Keim2026}.

Minimizing the residual of the integral conservation constraint in~\cref{eq:fvdg_coupling_face} via a constrained least-squares approach yields the projection matrix $\PFVtoDG_\ifaFV$.
This operator is subsequently deployed to map the updated numerical surface fluxes from the FV subcell faces back to the high-order DG face, given as
\begin{align}
  \PFVtoDG_\ifaFV = \frac{1}{|\Gamma_\ifaFV|} \Vm(\refpos_\ifa) \Vm^+_\ifaFV,
  \label{eq:PFVtoDGFace}
\end{align}
where the pseudo-inverse matrix $\Vm^+_\ifaFV$ is defined via the row-integrated Vandermonde vector $\check{\mathbf{v}}_\ifaFV = \mathbf{1}^\transpose \Wm_\ifaFV \Vm(\refpos_\ifaFV)$ as
\begin{align}
  \Vm^+_\ifaFV = (\check{\mathbf{v}}_\ifaFV \check{\mathbf{v}}_\ifaFV^\transpose)^{-1}\check{\mathbf{v}}_\ifaFV.
\end{align}

In a coupled DG/FV framework on curvilinear meshes, maintaining discrete conservation and free-stream preservation requires a consistent geometric treatment between the high-order polynomial element faces and the piecewise constant subcell finite volume faces. This geometric constraint is formalized in the following lemma.

\begin{lemma}[Consistent Surface Area of a DG/FV Interface]
  Let $\Gamma_\ifaFV$ denote a piecewise constant FV subcell face overlapping a high-order DG element face.
  Assume the surface quadrature rule is of at least degree $2\ppn$ to guarantee the exactness of the mass matrix.
  To ensure that the integrated surface area over a global DG face matches the sum of its FV subcell counterparts, the physical normal vector $\normalvec_\ifaFV$ and the consistent physical surface area element $\diagJ^{(\ielFV)}_\ifaFV$ must satisfy the discrete projection
  \begin{align}
    \diagJ^{(\ielFV)}_\ifaFV \normalvec_\ifaFV = (\PDGtoFV_{\iel\to\ifaFV} \ \adjJ^{(\iel)}) \normalvecref_\ifaFV,
    \label{eq:FVnormvec}
  \end{align}
  where the row-wise projection vector $\PDGtoFV_{\iel\to\ifaFV} \in \R^{1 \times \nq}$ from the nodal DG volume quantities to the FV subcell interface barycenter is defined as
  \begin{align}
    \PDGtoFV_{\iel\to\ifaFV} = \frac{1}{|\Gamma_\ifaFV|} \mathbf{1}^\transpose \Wm_\ifaFV \Vm(\refpos_\ifaFV) \widetilde{\Pm}.
  \end{align}
  The physical subcell surface area element is defined explicitly as the magnitude of the projected reference metric vector
  \begin{align}
    \diagJ^{(\ielFV)}_\ifaFV = \abs{(\PDGtoFV_{\iel\to\ifaFV} \ \adjJ^{(\iel)}) \normalvecref_\ifaFV}.
  \end{align}
  \label{lem:subcell_area}
\end{lemma}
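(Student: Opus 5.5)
The plan is to turn the area-consistency requirement into an identity for the integral of the vector-valued metric term $\adjJ^{(\iel)}\normalvecref_\ifa$ over the macro face, and then show that the projection in \cref{eq:FVnormvec} reproduces this integral exactly when summed over subcell faces. By Nanson's formula \cref{eq:nanson}, the DG face satisfies $\J^{(\iel)}_\ifa\normalvec_\ifa = \adjJ^{(\iel)}\normalvecref_\ifa$. The natural consistency condition, which gives discrete conservation and free-stream preservation, is that the integrated area-weighted normal on $\Gamma^\ifa$ equals the sum of the subcell contributions:
\begin{align*}
  \int_{\Gamma^\ifa} \adjJ^{(\iel)}\normalvecref_\ifa \, d\Gamma = \sum_{\ifaFV=1}^{n_\ifaFV} \diagJ^{(\ielFV)}_\ifaFV \normalvec_\ifaFV \, |\Gamma_\ifaFV| .
\end{align*}
Taking this vector identity, rather than the scalar area alone, as the target is essential. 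A constant free-stream flux $\fphys$ then contributes $\fphys\cdot(\text{summed vector})$ on both sides.

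\textbf{Step 1.} I would represent the nodal metric $\adjJ^{(\iel)}$ by its modal expansion $\widetilde{\Pm}\adjJ^{(\iel)}$. Because the unweighted mass matrix $\Vm^\transpose\Mm\Vm$ is exact under the stated degree-$2\ppn$ quadrature assumption, $\Vm\widetilde{\Pm}$ reproduces every polynomial in the DG space exactly. The reference normal $\normalvecref_\ifa$ is constant on each planar reference face, so it commutes with the projection.

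\textbf{Step 2.} I would integrate this polynomial over each subcell face $\Gamma_\ifaFV\subset\Gamma^\ifa$ with the subcell quadrature $\Wm_\ifaFV$, which is exact for polynomials of degree $\ppn$ restricted to the face. Then $|\Gamma_\ifaFV|\,\PDGtoFV_{\iel\to\ifaFV}\adjJ^{(\iel)}\normalvecref_\ifaFV = \int_{\Gamma_\ifaFV}\adjJ^{(\iel)}\normalvecref_\ifa\,d\Gamma$. The subcell faces partition $\Gamma^\ifa$; this follows from the conforming connectivities $S^{\text{hexa}}$, $S^{\text{prism}}$, $S^{\text{pyra}}$ and the tetrahedral split. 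Hence summing over $\ifaFV$ gives the target identity. The identity is satisfied precisely when $\diagJ^{(\ielFV)}_\ifaFV\normalvec_\ifaFV$ is set equal to this projection, which is \cref{eq:FVnormvec}.

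\textbf{Step 3.} Because $\normalvec_\ifaFV$ must be a unit vector, I take norms on both sides of \cref{eq:FVnormvec}. This yields the stated magnitude formula for $\diagJ^{(\ielFV)}_\ifaFV$, by analogy with the definition $\J^{(\iel)}_\ifa=\abs{\adjJ^{(\iel)}\normalvecref_\ifa}$. I would then check necessity: the macro-face identity must hold for arbitrary constant flux states, and hence also for each partition by refinement. This indicates that the definition is essentially forced, since any other choice breaks the free-stream residual on some subcell.

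\textbf{Expected main obstacle.} The hard part is justifying the exactness claims in Steps 1 and 2 on collapsed (non-hexahedral) faces. On those faces the metric terms, pulled back through $\mappingcol$, pick up the factor $\diagJref$, so they may not lie in the polynomial space that $\widetilde{\Pm}$ reproduces. My first approach would be to work entirely in the polytopal reference space $\refraumcol$, where the metrics of a degree-$\ppngeo$ PKD mapping are polynomials. I would also assume $\ppngeo$ is small enough that the degree-$2\ppn$ rule integrates them exactly. If that fails, I would weaken the claim to conservation with respect to the projected metric. That weaker statement is all the coupling needs, because the DG face term uses the same projected quantity.
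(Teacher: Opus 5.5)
Your proposal follows the paper's proof essentially step for step: the same vector-valued identity for $\int_{\Gamma_\ifa}\adjJ^{(\iel)}\normalvecref_\ifa\,d\Gamma$, the modal representation through $\Vm\widetilde{\Pm}$, subcell Gauss quadrature scaled by $1/|\Gamma_\ifaFV|$, and the magnitude/direction split that defines $\diagJ^{(\ielFV)}_\ifaFV$. Your exactness caveats (metric terms lying in the degree-$\ppn$ space, especially on collapsed faces) are extra care the paper omits; note also that the summed macro-face identity alone does not force the subcell-wise equality, which the paper simply imposes as a demand, so the ``precisely when'' in your Step~2 really rests on your Step~3 free-stream argument.
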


\begin{proof}
  Demanding that the integrated surface reference element area over a global DG surface matches the sum of its FV counterparts over all overlapping subcells yields
  \begin{align*}
    \int_{\Gamma_\ifa} \adjJ^{(\iel)} \normalvecref_\ifa \ d \Gamma = \sum_{\ifaFV=1}^{n_\ifaFV} \int_{\Gamma_\ifaFV} (\adjJ^{(\iel)})[\Gamma] \ \normalvecref_\ifa \ d \Gamma = \sum_{\ifaFV=1}^{n_\ifaFV} \adjJ^{(\ielFV)} \normalvecref_\ifaFV |\Gamma_\ifaFV|.
  \end{align*}
  On a straight or curvilinear element face, the unit reference normal vector of the subcell face is identical to its DG counterpart, $\normalvecref_\ifaFV = \normalvecref_\ifa$.
  Expressing the transpose of the adjoint of the Jacobian using the polynomial interpolation matrix $\Vm(\Gamma)$ and modal projection operator $\widetilde{\Pm}$ results in
  \begin{align*}
    \int_{\Gamma_\ifaFV} (\adjJ^{(\iel)})[\Gamma] \ \normalvecref_\ifa \ d \Gamma =
    \int_{\Gamma_\ifaFV} (\Vm(\Gamma) \widetilde{\Pm} \ \adjJ^{(\iel)}) \normalvecref_\ifaFV  d\Gamma.
  \end{align*}
  Evaluating this subcell face integral numerically via a Gaussian quadrature using $\refpos_\ifaFV$ as discrete integration nodes gives
  \begin{align*}
    \int_{\Gamma_\ifaFV} (\Vm(\Gamma) \widetilde{\Pm} \ \adjJ^{(\iel)}) \normalvecref_\ifaFV  d\Gamma =
      (\mathbf{1}^\transpose \Wm_\ifaFV \Vm(\refpos_\ifaFV) \widetilde{\Pm} \ \adjJ^{(\iel)}) \normalvecref_\ifaFV.
  \end{align*}
  This must be the same as the integral mean value at the subcell face barycenter.
  To isolate the integral mean value at the subcell face barycenter, the expression is scaled by the reference surface area of the subcell face, $|\Gamma_\ifaFV|$, leading to
  \begin{align*}
    \adjJ^{(\ielFV)} \normalvecref_\ifaFV = (\PDGtoFV_{\iel\to\ifaFV} \ \adjJ^{(\iel)}) \normalvecref_\ifaFV, \ \PDGtoFV_{\iel\to\ifaFV} = \frac{1}{|\Gamma_\ifaFV|} \mathbf{1}^\transpose \Wm_\ifaFV \Vm(\refpos_\ifaFV) \widetilde{\Pm}.
  \end{align*}
  This allows us to define a consistent, row-wise projection matrix $\PDGtoFV_{\iel\to\ifaFV} \in \R^{1 \times \nq}$ that maps the high-order DG volume metrics directly to the subcell face.
  By vector algebra, any spatial metric vector can be decomposed uniquely into its scalar magnitude multiplied by its unit direction vector. Applying this identity to Nanson's formula for the physical surface transformation yields
  \begin{align*}
    (\PDGtoFV_{\iel\to\ifaFV} \ \adjJ^{(\iel)}) \normalvecref_\ifaFV
    = \abs{(\PDGtoFV_{\iel\to\ifaFV} \ \adjJ^{(\iel)}) \ \normalvecref_\ifaFV} \normalvec_\ifaFV.
  \end{align*}
  Equating the scalar magnitude to the physical subcell surface area element, $\diagJ^{(\ielFV)}_\ifaFV = \abs{(\PDGtoFV_{\iel\to\ifaFV} \ \adjJ^{(\iel)}) \ \normalvecref_\ifaFV}$, retrieves the target definition in~\cref{eq:FVnormvec}. This completes the proof.
\end{proof}

Thus, the subcell face normal vector $\normalvec_\ifaFV$ and its consistent surface element $\diagJ^{(\ielFV)}_\ifaFV$ represent the integral mean values of the high-order DG face via a conservative projection.
Building upon this geometric foundation, a consistent coupling between DG face and subcell surface fluxes can be formulated.

\begin{lemma}[Conservative Coupling of Subcell Surface Fluxes]
  Let \Cref{lem:subcell_area} hold. The high-order DG surface flux and the subcell FV surface fluxes can be coupled conservatively on arbitrary curvilinear grids. The target DG surface flux is obtained via a conservative projection of the subcell FV fluxes
  \begin{align}
    \diagJ_\ifa^{(\iel)} (\mathbf{f}_\ifa \cdot \normalvec_\ifa) = \PFVtoDG_\ifaFV \left( \diagJ_\ifaFV^{(\ielFV)} (\mathbf{f}_\ifaFV \cdot \normalvec_\ifaFV) \right),
    \label{eq:fvdg_coupling_flux}
  \end{align}
  where $\PFVtoDG_\ifaFV$ is the conservative subcell-to-DG reconstruction operator.
  \label{lem:flux_coupling}
\end{lemma}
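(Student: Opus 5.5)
The plan is to reduce the statement to a discrete integral identity over one macroscopic DG face $\Gamma_\ifa$, and then show that the reconstruction operator $\PFVtoDG_\ifaFV$ satisfies that identity by construction. Conservation of the coupling means that the total flux through $\Gamma_\ifa$ is the same whether it is evaluated on the DG face nodes or summed over the $n_\ifaFV$ subcell faces. Writing $g_\ifaFV := \diagJ_\ifaFV^{(\ielFV)}(\mathbf{f}_\ifaFV\cdot\normalvec_\ifaFV)$ for the subcell flux, the target condition, in analogy with \cref{eq:fvdg_coupling_face}, is
\begin{align*}
  \mathbf{1}^\transpose \Wm \, \diagJ_\ifa^{(\iel)}(\mathbf{f}_\ifa\cdot\normalvec_\ifa) = \sum_{\ifaFV=1}^{n_\ifaFV} g_\ifaFV\, |\Gamma_\ifaFV| .
\end{align*}
I would first justify that the left-hand side is the exact face integral of the DG face flux. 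This follows from the assumption of \Cref{lem:subcell_area} that the surface quadrature is exact to degree $2\ppn$.

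\textbf{Geometric step.} I would invoke \Cref{lem:subcell_area} to show that the geometric parts of both sides agree. By \cref{eq:FVnormvec}, each $\diagJ^{(\ielFV)}_\ifaFV\normalvec_\ifaFV$ is the integral mean of $\adjJ^{(\iel)}\normalvecref_\ifa$ over $\Gamma_\ifaFV$. Summing $|\Gamma_\ifaFV|$ times these means therefore recovers $\int_{\Gamma_\ifa}\adjJ^{(\iel)}\normalvecref_\ifa\,d\Gamma$, using $\normalvecref_\ifaFV=\normalvecref_\ifa$. Consequently, for a constant state (free stream), both sides of the target identity coincide. This is the consistency part of the claim.

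\textbf{Conservation of the reconstruction.} Next I would insert $\diagJ_\ifa^{(\iel)}(\mathbf{f}_\ifa\cdot\normalvec_\ifa)=\sum_\ifaFV \PFVtoDG_\ifaFV g_\ifaFV$ into the left-hand side, using \cref{eq:PFVtoDGFace}. It then suffices to prove, for each subcell face $\ifaFV$,
\begin{align*}
  \frac{1}{|\Gamma_\ifaFV|}\,\mathbf{1}^\transpose\Wm\,\Vm(\refpos_\ifa)\,\Vm^+_\ifaFV\, g_\ifaFV = g_\ifaFV|\Gamma_\ifaFV| .
\end{align*}
Two facts enter here. First, the orthonormal Legendre/PKD basis contains the constant mode, so $\mathbf{1}^\transpose\Wm\Vm(\refpos_\ifa)$ picks out only the mean (zeroth) modal coefficient, up to a normalization constant. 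Second, $\Vm^+_\ifaFV$ is the constrained least-squares solution of $\check{\mathbf v}_\ifaFV\hat{\mathbf g}=g_\ifaFV|\Gamma_\ifaFV|$. Because $\check{\mathbf v}_\ifaFV\,\Vm^+_\ifaFV$ equals the identity on this one-dimensional constraint, the reconstructed modal coefficients reproduce the prescribed subcell integral exactly.

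\textbf{Main obstacle.} The hardest step is linking the subcell-integrated constraint $\check{\mathbf v}_\ifaFV$ to the integral over the whole face. Because the subcell faces tile $\Gamma_\ifa$ and each subcell quadrature is exact for degree $\ppn$ polynomials, I expect $\sum_\ifaFV\check{\mathbf v}_\ifaFV = \mathbf{1}^\transpose\Wm\Vm(\refpos_\ifa)$. Combining this identity with the constraint reproduction from the previous step telescopes the sum to $\sum_\ifaFV g_\ifaFV|\Gamma_\ifaFV|$.

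If the per-face pseudo-inverse turns out not to be exact on its own, I would instead treat all $n_\ifaFV$ constraints jointly as the stacked system of \cref{eq:fvdg_coupling_face}. I would then use the KKT conditions of the constrained least-squares problem, where the conservation constraint is enforced exactly, to conclude the identity. This would also make precise the normalization factor $1/|\Gamma_\ifaFV|$ in the operator.
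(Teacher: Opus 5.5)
Your main route has a genuine gap, at exactly the step you flagged. The reproduction property $\check{\mathbf v}_{\ifaFV}\Vm^+_{\ifaFV}=1$ controls the integral of the $\ifaFV$-th reconstruction only over its \emph{own} face $\Gamma_{\ifaFV}$. Your tiling identity $\sum_{\ifaFV'}\check{\mathbf v}_{\ifaFV'}=\mathbf 1^\transpose\Wm\Vm(\refpos_\ifa)$ is correct, and it turns the whole-face integral into $\sum_{\ifaFV'}\check{\mathbf v}_{\ifaFV'}\Vm^+_{\ifaFV}$. Only the diagonal term of that sum is known. Telescoping would need $\check{\mathbf v}_{\ifaFV'}\Vm^+_{\ifaFV}=\delta_{\ifaFV'\ifaFV}$, i.e.\ $\mathbf A\mathbf R=\mathbf I$, where $\mathbf A$ is the stacked matrix with rows $\check{\mathbf v}_{\ifaFV}$ and $\mathbf R$ the stacked reconstruction. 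Whenever $n_{\ifaFV}$ exceeds the number $M$ of face modes (e.g.\ $\FVppn=2\ppn+1$), this is impossible, because $\mathrm{rank}(\mathbf A\mathbf R)\le M$.

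For the literal per-face min-norm operator the failure is explicit. Let $\Pi$ be the $L_2(\Gamma_\ifa)$-projection onto the face polynomials and $\chi_{\ifaFV}$ the indicator of $\Gamma_{\ifaFV}$. A subcell integral $G$ is reconstructed as $G\,\Pi\chi_{\ifaFV}/\lVert\Pi\chi_{\ifaFV}\rVert^2$, whose integral over $\Gamma_\ifa$ is $G\,|\Gamma_{\ifaFV}|/\lVert\Pi\chi_{\ifaFV}\rVert^2$. By Bessel's inequality this exceeds $G$ unless $\Gamma_{\ifaFV}=\Gamma_\ifa$. Take $\Gamma_\ifa=[-1,1]$, $\ppn=1$, $\FVppn=3$ and the middle subcell face: the reconstruction is the constant $\tfrac32 G$. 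It carries $G$ on the middle face, as required, but also deposits $G$ on each outer face.

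Your fallback is therefore not a contingency but the whole proof, and it is short. Define the reconstruction jointly by minimizing $\lVert\mathbf A\hat{\mathbf u}-\mathbf G\rVert$ subject to $\mathbf 1^\transpose\mathbf A\hat{\mathbf u}=\mathbf 1^\transpose\mathbf G$, with $\mathbf G_{\ifaFV}=g_{\ifaFV}|\Gamma_{\ifaFV}|$. By the tiling identity the constraint row equals $\mathbf 1^\transpose\Wm\Vm(\refpos_\ifa)$, so primal feasibility alone is the conservation statement. The KKT system is needed only for well-posedness and linearity. You must read $\PFVtoDG_{\ifaFV}$ as the $\ifaFV$-th column of this joint operator (summed over $\ifaFV$) and fix the normalization accordingly. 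Your second ingredient does not survive this change: the joint operator does not reproduce individual subcell integrals. In the example it maps the middle datum $G$ to the constant $\tfrac12 G$, which carries only $\tfrac13 G$ on the middle face.

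The paper argues differently. From the demanded integral identity, subcell quadrature, Nanson's formula and \Cref{lem:subcell_area}, it derives the forward relation $\diagJ^{(\ielFV)}_{\ifaFV}(\mathbf f_{\ifaFV}\cdot\normalvec_{\ifaFV})=\PDGtoFV_{\ifaFV}\bigl(\diagJ^{(\iel)}_\ifa(\mathbf f_\ifa\cdot\normalvec_\ifa)\bigr)$. It then applies the left-inverse property $\PFVtoDG_{\ifaFV}\PDGtoFV_{\ifaFV}=\mathbf I$. That route gives exact recovery of a DG face flux from its own subcell projections. The corrected version of yours gives preservation of the total face integral for arbitrary subcell fluxes. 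That is what the telescoping in \Cref{thm:conservation} actually uses, since numerical subcell fluxes generally do not lie in the range of the forward projection.
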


\begin{proof}
  To ensure global conservation across the coupled interface, we demand that the total integrated flux over a global DG surface element matches the sum of its FV counterparts over all overlapping subcells
  \begin{align*}
    \int_{\Gamma_\ifa} \mathbf{f}_\ifa \cdot \adjJ^{(\iel)} \normalvecref_\ifa \ d \Gamma = \sum_{\ifaFV=1}^{n_\ifaFV} \int_{\Gamma_\ifaFV} (\mathbf{f}_\ifa \cdot \adjJ^{(\iel)} \ \normalvecref_\ifa)[\Gamma] \ d \Gamma = \sum_{\ifaFV=1}^{n_\ifaFV} \mathbf{f}_\ifaFV \cdot \adjJ^{(\ielFV)} \normalvecref_\ifaFV |\Gamma_\ifaFV|.
  \end{align*}
  We approximate the continuous DG surface quantities over a subcell face $\Gamma_\ifaFV$ by leveraging the polynomial interpolation matrix $\Vm_\ifaFV(\Gamma)$ and the nodal-to-modal projection operator $\Pm_\ifa$
  \begin{align*}
    \int_{\Gamma_\ifaFV} (\mathbf{f}_\ifa \cdot \adjJ^{(\iel)} \ \normalvecref_\ifa)[\Gamma] \ d \Gamma =
      \int_{\Gamma_\ifaFV} \Vm_\ifaFV(\Gamma) \Pm_\ifa (\mathbf{f}_\ifa \cdot \adjJ^{(\iel)} \ \normalvecref_\ifa) \ d\Gamma.
  \end{align*}
  Evaluating this subcell face integral numerically via Gaussian quadrature on the discrete integration nodes $\refpos_\ifaFV$ yields
  \begin{align*}
    \int_{\Gamma_\ifaFV} \Vm_\ifaFV(\Gamma) \Pm_\ifa (\mathbf{f}_\ifa \cdot \adjJ^{(\iel)} \ \normalvecref_\ifa)  d\Gamma =
      \mathbf{1}^\transpose \Wm_\ifaFV \Vm_\ifaFV(\refpos_\ifaFV) \Pm_\ifa \ (\mathbf{f}_\ifa \cdot \adjJ^{(\iel)} \ \normalvecref_\ifa).
  \end{align*}
  Enforcing that this integrated quantity equals the integral mean value evaluated at the subcell face barycenter and dividing by the reference surface area of the subcell face, $|\Gamma_{\ifaFV}|$, yields the consistent row-wise DG-to-FV projection operator $\PDGtoFV_\ifaFV$, matching \cref{eq:PDGtoFVFace}.
  Applying Nanson's formula, given in~\cref{eq:nanson}, then results in
  \begin{align*}
    f_\ifaFV \cdot (\diagJ_\ifaFV^{(\ielFV)} \normalvec_\ifaFV) = \frac{1}{|\Gamma_\ifaFV|} \mathbf{1}^\transpose \Wm_\ifaFV \Vm_\ifaFV(\refpos_\ifaFV) \Pm_\ifa \ \left( \mathbf{f}_\ifa \cdot \left(\diagJ_\ifa^{(\iel)} \normalvec_\ifa \right) \right) = \PDGtoFV_\ifaFV \left( \mathbf{f}_\ifa \cdot \left(\diagJ_\ifa^{(\iel)} \normalvec_\ifa \right) \right).
  \end{align*}
  Noting that the subcell-to-DG reconstruction matrix $\PFVtoDG_\ifaFV$ is the pseudo-inverse satisfying $\PFVtoDG_\ifaFV \PDGtoFV_\ifaFV = \mathbf{I}$ yields the target coupled expression in~\cref{eq:fvdg_coupling_flux}. This completes the proof.
\end{proof}

With the geometric relation of the interface fluxes established in \Cref{lem:flux_coupling}, the arbitrary fluxes $\mathbf{f} \cdot \normalvec$ are replaced either by a consistent numerical flux functions or the viscous surface fluxes.
An illustrative example of the coupling between both schemes, FV subcell and DGSEM, is shown in \cref{fig:fvdg_coupling_face}.

At shared element boundaries, the conservative coupling between the DG and FV subcell operators is established by first mapping the DG conservative variables at the interface to the FV subgrid using \cref{eq:PDGtoFVFace}. The numerical flux is subsequently evaluated on the subcell level and conservatively mapped back to the DG space via the projection matrix defined in \cref{eq:PFVtoDGFace}.
This yields the modified numerical surface flux for the DG scheme, which replaces the baseline term in~\cref{eq:dgsem:convective}
\begin{align}
  \diagJ_\ifa^{(\iel)} f^\ast(\cons_\ifa^+,\cons_\ifa^-;\normalvec_\ifa) = \PFVtoDG_\ifaFV \diagJ_\ifaFV^{(\ielFV)} f^\ast(\PDGtoFV_\ifaFV \cons_\ifa^{+}, \cons_{\ifaFV}^{-}; \normalvec_\ifaFV) = (\diagJ f^\ast)_\ifa.
  \label{eq:fvdg_coupling_fluxc}
\end{align}

An analogous procedure is applied to evaluate the viscous terms at a hybrid DG/FV element interface. The resulting viscous surface flux for the DG method, which replaces the term $\diagJ^{(\iel)}_\ifa \avg{\fphys_v(\cons_\ifa,\nabla \prim_\ifa) \cdot \normalvec_\ifa}$ in \cref{eq:dgsem:viscous}, is expressed as
\begin{align}
  (\diagJ f_v)_\ifa = \PFVtoDG_\ifaFV \diagJ_\ifaFV^{(\ielFV)} \fphys_v\left(\avg{\bar{\cons}^{(\ielFV),+}, \cons^{(\ielFV),-}}, \ \avg{\widebar{\nabla \prim}^{(\ielFV),+}, \nabla \prim^{(\ielFV),-}}\right) \cdot \normalvec_\ifaFV.
  \label{eq:fvdg_coupling_fluxv}
\end{align}
where the terms inside the operator mapping again correspond to the standard low-order viscous fluxes evaluated on the FV subcells faces.
Here, $\bar{(\cdot)}^{(\ielFV)} = (\PDGtoFV_\ielFV (\cdot)^{(\iel)})$ denotes quantities in the $\iel$-th DG cell mapped to the FV cell averages, and $(\cdot)^{(\ielFV),+}$ represents the integral mean values for the FV sub-cells that directly touch the shared FV/DG interface.

This choice avoids the necessity of evaluating the fluxes at individual quadrature points while guaranteeing that the coupled DG/FV scheme is conservative on arbitrary curvilinear grids.
If the interface does not separate differing frameworks, i.e., a pure DG-to-DG or FV-to-FV element interface, the standard DG or FV surface fluxes are utilized, respectively.
The conservation statement is formalized in~\Cref{thm:conservation}. A numerical demonstration is provided in~\cref{sec:validation}.
Building directly upon~\Cref{thm:conservation}, the coupled scheme is also free-stream preserving on curved meshes, as stated in~\Cref{corr:freestream}.

\begin{figure}
  \centering
  \includegraphics[width=0.9\linewidth]{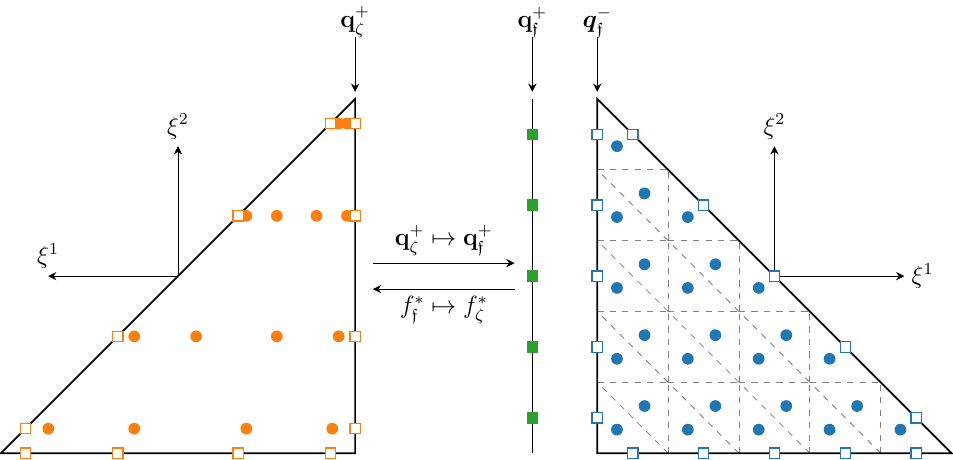}
  \caption{Coupling of DG (left) and FV (right) cell interfaces for triangular elements. The degrees of freedom of the DG scheme are
  highlighted by orange circles and the barycenters of the FV subcells by blue circles. The corresponding degrees of freedom at the
  cell faces are indicated by the squares. The green dots represent the so-called mortar interface used to compute the numerical
fluxes.}
  \label{fig:fvdg_coupling_face}
\end{figure}

\begin{theorem}[Conservation]
  Assume a computational domain with periodic boundary conditions.
  Let the numerical surface fluxes $f^\ast$ be consistent, and assume the volume quadrature rule is of at least degree $2\ppn$ to guarantee the exactness of the mass matrices.
  Assume the metric identities hold ($\divXI \Jm = \Null$) and the normal vectors are computed using~\cref{eq:nanson}, such that $\Dm$ satisfies the discrete metric identities $\Dm ~ \mathbf{1}_{\nq} = \mathbf{0}$.
  Furthermore, let the subcell volume metrics be defined via the conservative volumetric projection, and let the subcell normal vectors be defined using \Cref{lem:subcell_area}.
  If the coupled convective and viscous surface fluxes at hybrid DG/FV element interfaces are computed via the discrete projection mappings defined in \Cref{lem:flux_coupling}~(\cref{eq:fvdg_coupling_fluxc,eq:fvdg_coupling_fluxv}), then the semi-discrete coupled DG/FV scheme is discretely conservative.
  The local conservation properties hold identically for a high-order DG element
  \begin{align}
    \Mm \diagJref \diagJ^{(\iel)} \cons^{(\iel)}_t = - \sum_{\ifa=1}^{\Nf} \Vm_\ifa^\transpose \Wm \diagJref_\ifa \lr{(\diagJ f^\ast)_\ifa - (\diagJ f_v)_\ifa}
  \end{align}
  and analogously for each $\ielFV$-th FV subcell
  \begin{align}
    |E_\ielFV|\diagJ^{(\ielFV)} \cons^{(\ielFV)}_t =
    - \sum_{\ifaFV=1}^{N_{\ifaFV_{\text{inner}}}} \diagJ_\ifaFV^{(\ielFV)} f^\ast(\cons_\ifaFV^{+},\cons_\ifaFV^{-};\normalvec_\ifaFV)
    - \sum_{\ifaFV=1}^{N_{\ifaFV_{\text{outer}}}} \diagJ_\ifaFV^{(\ielFV)} f^\ast(\PDGtoFV_\ifaFV \cons_\ifa^{+}, \cons_{\ifaFV}^{-}; \normalvec_\ifaFV) \\
    + \sum_{\ifaFV=1}^{N_{\ifaFV_{\text{inner}}}} \diagJ_\ifaFV^{(\ielFV)} \fphys_v(\cons_\ifaFV,\nabla \prim_\ifaFV) \cdot \normalvec_\ifaFV
    + \sum_{\ifaFV=1}^{N_{\ifaFV_{\text{outer}}}} \diagJ_\ifaFV^{(\ielFV)} \fphys_v\left(\avg{\bar{\cons}^{(\ielFV),+}, \cons^{(\ielFV),-}}, \avg{\widebar{\nabla \prim}^{(\ielFV),+}, \nabla \prim^{(\ielFV),-}}\right),\nonumber
  \end{align}
  where $N_{\ifaFV_{\text{inner}}}$ and $N_{\ifaFV_{\text{outer}}}$ denote the number of subcell faces residing internally and on the DG element boundary, respectively.
  \label{thm:conservation}
\end{theorem}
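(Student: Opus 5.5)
The plan is to prove discrete conservation in three layers: the DG element, the FV subcell, and their sum over the periodic domain. Throughout I would test against the constant function, i.e.\ left-multiply by $\mathbf{1}^\transpose$.

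\textbf{DG element.} I would left-multiply the semi-discrete DGSEM by $\mathbf{1}_{\nq}^\transpose$. The volume terms then reduce to $\mathbf{1}^\transpose \Dm^\transpose \Mm \fphysref^{(\iel,p)} = (\Dm\mathbf{1})^\transpose \Mm\fphysref^{(\iel,p)}$, and this vanishes by the assumed discrete metric identity $\Dm\mathbf{1}_{\nq}=\mathbf{0}$; the same holds for the viscous volume term. What remains is the surface contribution with the replaced fluxes $(\diagJ f^\ast)_\ifa$ and $(\diagJ f_v)_\ifa$ from \cref{eq:fvdg_coupling_fluxc,eq:fvdg_coupling_fluxv}. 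This gives the stated DG identity. Because the quadrature has degree $2\ppn$, $\mathbf{1}^\transpose\Mm\diagJref\diagJ^{(\iel)}\cons_t$ is the exact time derivative of the element integral.

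\textbf{FV subcell.} The subcell identity follows directly from multiplying \cref{eq:FV_scheme} by $|E_\ielFV|$ and splitting the faces into inner faces and faces on the DG boundary. The volume metric definition \cref{eq:fvdg_coupling_metricsvol} together with \cref{eq:fvdg_coupling_vol} shows that $\sum_\ielFV |E_\ielFV|\diagJ^{(\ielFV)}\cons^{(\ielFV)}$ equals the element integral of $\diagJ\cons$. Hence the FV and DG representations carry the same conserved mass, and switching between them is conservative.

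\textbf{Summation.} I would sum over all elements and subcells and show that every face flux cancels telescopically. There are three cases.
\begin{enumerate}
\item Inner FV faces, and faces between two FV elements: the numerical flux $f^\ast$ is consistent and conservative, $f^\ast(u^+,u^-;n)=-f^\ast(u^-,u^+;-n)$. By \Cref{lem:subcell_area}, both sides use the same $\diagJ^{(\ielFV)}_\ifaFV\normalvec_\ifaFV$, since the projected metric is single-valued on conforming faces with opposite reference normals. The fluxes therefore cancel.
\item DG/DG faces: cancellation is standard, since $\diagJ_\ifa$ and $\normalvec_\ifa$ from \cref{eq:nanson} coincide on conforming interfaces.
\item Hybrid DG/FV faces: this is the main obstacle. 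The DG side contributes $\mathbf{1}^\transpose\Wm\diagJref_\ifa\,\PFVtoDG_\ifaFV(\cdot)$ summed over subcell faces. I must show this equals $\sum_\ifaFV |\Gamma_\ifaFV|\diagJ^{(\ielFV)}_\ifaFV f^\ast_\ifaFV$, so that it cancels the FV outer-face terms exactly. For this I would use that $\PFVtoDG_\ifaFV$ is the constrained least-squares reconstruction satisfying the integral constraint \cref{eq:fvdg_coupling_face}: quadrature of the reconstructed polynomial over $\Gamma_\ifaFV$ reproduces the prescribed mean times $|\Gamma_\ifaFV|$. Equivalently, I would use $\PDGtoFV_\ifaFV\PFVtoDG_\ifaFV=\mathbf I$ from \Cref{lem:flux_coupling} and sum over $\ifaFV$. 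Summing the subcell quadratures gives the full face quadrature because the subcell faces partition $\Gamma_\ifa$ and the rules are exact for the polynomial degree involved. I expect the delicate point to be verifying that $\mathbf{1}^\transpose\Wm\diagJref_\ifa\Vm(\refpos_\ifa)$ applied to the modal reconstruction equals the sum of the subcell integrals, which requires the face quadrature exactness of degree $2\ppn$.
\end{enumerate}
The viscous hybrid flux cancels in the same way, because both sides evaluate the identical averaged argument.

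With periodic boundaries no boundary terms remain. Hence $\frac{d}{dt}$ of the total conserved quantity is zero, which establishes the theorem.
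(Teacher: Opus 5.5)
Your overall route is the paper's proof. You test the DG update with $\mathbf{1}$ so that $\Dm\mathbf{1}=\mathbf{0}$ removes the volume terms, multiply the subcell update by $|E_\ielFV|$, and reduce global conservation to the hybrid-face identity $\mathbf{1}^\transpose\Wm\diagJref_\ifa(\diagJ f^\ast)_\ifa=\sum_{\ifaFV}|\Gamma_\ifaFV|\,\diagJ^{(\ielFV)}_\ifaFV f^\ast_\ifaFV$, which is exactly the display in the paper's argument. The problem is how you justify that identity.

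Both forms you give fail in the overdetermined regime the method is built for (e.g.\ the default $\FVppn=2\ppn+1$). The stacked operator $\PDGtoFV$ factors through the face modal space, whose dimension is smaller than $n_\ifaFV$. Hence $\PDGtoFV\PFVtoDG$ is rank deficient and cannot be $\mathbf{I}$, and no least-squares reconstruction reproduces every subcell mean individually. You also cannot obtain the identity from \Cref{lem:flux_coupling}. The lemma asserts the reverse order, $\PFVtoDG\PDGtoFV=\mathbf{I}$, and a left-inverse identity fixes $\PFVtoDG$ only on the range of $\PDGtoFV$, i.e.\ on subcell data that are means of a face polynomial. The subcell fluxes $\diagJ^{(\ielFV)}_\ifaFV f^\ast_\ifaFV$ are generic data with components outside that range. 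There a left inverse is arbitrary and need not preserve $\sum_{\ifaFV}|\Gamma_\ifaFV|(\cdot)$.

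What the step needs, and all it needs, is the single equality constraint of the constrained least-squares problem. For every vector $g$ of subcell-face values, the integrals of the reconstructed polynomial over the $\Gamma_\ifaFV$, computed with the (exact) subcell rules that define $\check{\mathbf{v}}_\ifaFV$, must sum to $\sum_{\ifaFV}|\Gamma_\ifaFV|\,g_\ifaFV$. Because the $\Gamma_\ifaFV$ partition $\Gamma_\ifa$, that sum is the integral over the whole DG face, which $\mathbf{1}^\transpose\Wm\diagJref_\ifa$ evaluates exactly. Together with $\Vm_\ifa\mathbf{1}=\mathbf{1}$, this yields the hybrid-face identity without any per-subcell claim.

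So drop the per-subcell gloss and the ``equivalently'' route, and state explicitly that conservation rests on this constraint. The paper's proof asserts the identity via \Cref{lem:flux_coupling} and the remark that $\PFVtoDG_\ifaFV$ is an algebraic adjoint of $\PDGtoFV_\ifaFV$; this constraint is what actually makes that true.

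Finally, your target carries $|\Gamma_\ifaFV|$, while the subcell update in the statement does not. Since \Cref{lem:subcell_area} defines $\diagJ^{(\ielFV)}_\ifaFV$ as a mean value, the outer-face terms must carry $|\Gamma_\ifaFV|$ for the cancellation you claim to be exact.
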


\begin{proof}
  We first consider the DG element update. Summing the statement in~\cref{eq:dgsem:viscous} over all degrees of freedom in one element, exploiting the property $\Dm ~ \mathbf{1}_{\nq} = \mathbf{0}$ and the discrete metric identities, the volumetric terms telescope to zero, leading to
  \begin{align*}
    \mathbf{1}_{\nq}^\transpose \Mm \diagJref \diagJ^{(\iel)} \cons^{(\iel)}_t = - \mathbf{1}_{\nq}^\transpose \sum_{\ifa=1}^{\Nf} \Vm_\ifa^\transpose \Wm \diagJref_\ifa \lr{(\diagJ f^\ast)_\ifa - (\diagJ f_v)_\ifa}.
  \end{align*}
  By construction, the coupled boundary terms $(\diagJ f^\ast)_\ifa$ and $(\diagJ f_v)_\ifa$ are defined using the conservative projection operator derived in \Cref{lem:flux_coupling}.
  Applying this lemma ensures that the total flux injected or removed at the DG face matches the algebraic sum of the corresponding individual subcell fluxes appearing in the subcell finite volume expression~(\cref{eq:FV_scheme})
  \begin{align*}
    \mathbf{1}_{\nq}^\transpose \Vm_\ifa^\transpose \Wm \diagJref_\ifa (\diagJ f^\ast)_\ifa = \sum_{\ifaFV=1}^{n_\ifaFV} \diagJ_\ifaFV^{(\ielFV)} f^\ast(\PDGtoFV_\ifaFV \cons_\ifa^{+}, \cons_{\ifaFV}^{-}; \normalvec_\ifaFV) |\Gamma_{\ifaFV}|.
  \end{align*}
  The same applies for the viscous fluxes.
  Local conservation for both schemes is thus established simultaneously, as the projection operators preserve the integral properties across the shared faces.
  Global conservation follows by summing these local cell updates over all elements $\iel = 1, \dots, \nel$ and subcells across the entire computational domain.
  For internal subcell faces, the uniqueness of the numerical fluxes guarantee pairwise cancellation.
  For the hybrid DG/FV interfaces, the projection operator $\PFVtoDG_\ifaFV$ is the exact algebraic adjoint of $\PDGtoFV_\ifaFV$ under the weight-adjusted surface mass matrix metric.
  Consequently, all inter-element fluxes form a telescoping sum and cancel out pairwise under periodic boundary conditions. This completes the proof of \Cref{thm:conservation}.
\end{proof}

\begin{corollary}[Free-Stream Preservation of the Coupled Scheme]
  Let \Cref{thm:conservation} and \Cref{lem:flux_coupling} hold, and let $\Vm_\ifa$ be exact for a constant state. The coupled high-order DG / subcell FV scheme is free-stream preserving on arbitrary curvilinear grids, meaning that an initially uniform constant state $\cons = \mathbf{C}$ is preserved exactly over time, $\cons_t = \mathbf{0}$.
  \label{corr:freestream}
\end{corollary}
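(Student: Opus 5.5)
The argument substitutes the constant state $\cons = \mathbf{C}$ into each of the three operators: the pure DG update, the pure FV subcell update (\cref{eq:FV_scheme}), and the hybrid interface fluxes (\cref{eq:fvdg_coupling_fluxc,eq:fvdg_coupling_fluxv}). In every case the residual should reduce to a constant physical flux contracted with a sum of discrete metric terms, and each such sum should vanish by a discrete metric identity.

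\textbf{Step 1: reduce all states to $\mathbf{C}$.} Since $\cons = \mathbf{C}$ in every element, all interpolated and projected states equal $\mathbf{C}$:
\begin{itemize}
\item $\Vm_\ifa$ is exact for constants, so $\cons_\ifa^\pm = \mathbf{C}$.
\item The projections $\PDGtoFV_\ielFV$ and $\PDGtoFV_\ifaFV$ are row-normalised by $|E_\ielFV|$ and $|\Gamma_\ifaFV|$, so they reproduce constants. For the volume projection with the weight-adjusted mass matrix, I would rather invoke \Cref{thm:conservation} together with \cref{eq:fvdg_coupling_metricsvol} than verify this directly.
\item Least-squares gradients of constant data vanish, the limited slopes are zero, and so the reconstructed face states are $\mathbf{C}$.
\end{itemize}
Consistency of $f^\ast$ then gives $f^\ast(\mathbf{C},\mathbf{C};\normalvec)=\Fc(\mathbf{C})\cdot\normalvec$. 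All viscous fluxes vanish, because every gradient (BR1 lifting or least squares) of a constant is zero. Only convective metric terms remain.

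\textbf{Step 2: the DG element.} The volume term becomes $\sum_p \Dm^\transpose\Mm(\adjJ^{(\iel)})_p\cdot\Fc(\mathbf{C})$. The surface term, using Nanson's formula \cref{eq:nanson}, becomes $\sum_\ifa \Vm_\ifa^\transpose\Wm\diagJref_\ifa\adjJ^{(\iel)}\normalvecref_\ifa\cdot\Fc(\mathbf{C})$.
\begin{itemize}
\item Summation-by-parts of the collocated Gauss operator converts $\Dm^\transpose\Mm$ into a boundary term minus $\Mm\Dm$.
\item The boundary part cancels the surface term.
\item The remainder $\Mm\sum_p\Dm(\adjJ)_p$ vanishes by the assumed discrete metric identities.
\end{itemize}
For hybrid faces, \Cref{lem:flux_coupling} with $\PFVtoDG_\ifaFV\PDGtoFV_\ifaFV=\mathbf{I}$ returns exactly $\diagJ_\ifa^{(\iel)}\Fc(\mathbf{C})\cdot\normalvec_\ifa$. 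The hybrid-face contribution is therefore identical to the pure DG one.

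\textbf{Step 3: the FV subcells.} Here the residual is $\Fc(\mathbf{C})\cdot\sum_{\ifaFV}\diagJ^{(\ielFV)}_\ifaFV\normalvec_\ifaFV$. By \Cref{lem:subcell_area}, each term equals $\PDGtoFV_{\iel\to\ifaFV}\adjJ^{(\iel)}\normalvecref_\ifaFV$, i.e. the subcell-face integral of the projected polynomial metric $\Vm\widetilde{\Pm}\adjJ^{(\iel)}$, scaled to the reference face area, where the interior subcell faces should use the same formula. The closed-surface sum over $\partial E_\ielFV$ is then $\int_{E_\ielFV}\divXI(\Vm\widetilde{\Pm}\adjJ^{(\iel)})\,d\refpos$. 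This vanishes provided two things hold:
\begin{itemize}
\item the projected metric still satisfies the continuous metric identity;
\item the subcell face quadrature is exact for it.
\end{itemize}
Without the mass-matrix assumption used for $\widetilde{\Pm}$, one can instead require that the collocated metrics be computed in curl form, so that the projection preserves the divergence-free property.

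\textbf{Main obstacle.} The hard step is this last one: showing that the projected subcell metrics satisfy a closed-surface identity on collapsed (pyramid/tetrahedral) subcells. It holds only if $\widetilde{\Pm}$ is an exact polynomial representation, which I would argue from exactness of degree-$2\ppn$ quadrature. Every other step is direct bookkeeping.
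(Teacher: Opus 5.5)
Your outline follows the paper's proof. You substitute $\cons=\mathbf{C}$, reduce all interface states to $\mathbf{C}$ using exactness of $\Vm_\ifa$ and $\PDGtoFV_\ifaFV$ on constants and consistency of $f^\ast$, discard the viscous terms because all gradients vanish, and let the metric identities remove what is left. Your use of $\PFVtoDG_\ifaFV\PDGtoFV_\ifaFV=\mathbf{I}$ to turn every hybrid face back into a pure DG face is the explicit form of the paper's substitution of \cref{eq:fvdg_coupling_flux} into the DG surface operator. Where the paper ends with one sentence (``the discrete operators satisfy both the metric identities and the metric closure property exactly''), you correctly isolate what kills the subcell residual: the closure $\sum_{\ifaFV\in\partial E_\ielFV}|\Gamma_\ifaFV|\,\diagJ^{(\ielFV)}_\ifaFV\normalvec_\ifaFV=\mathbf{0}$, which the paper only names. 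Two side remarks on Step~1:
\begin{itemize}
\item Your caution about the weight-adjusted volume projection $\PDGtoFV_\ielFV$ is unnecessary. It enters only through the viscous coupling \cref{eq:fvdg_coupling_fluxv}, which vanishes anyway.
\item The face operator $\PDGtoFV_\ifaFV$, which does enter the convective coupling, is built from the equally weight-adjusted $\mathcal{M}_\ifa$. Row normalisation alone therefore does not show $\PDGtoFV_\ifaFV\mathbf{C}=\mathbf{C}$; here you rely on the same assertion the paper makes.
\end{itemize}

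The one step that does not go through as you propose is the main obstacle you flag. Exactness of degree-$2\ppn$ quadrature cannot make the projected metric divergence-free.
\begin{itemize}
\item Reproduction of $\mathrm{span}\{\varphi\}$ by $\widetilde{\Pm}$ is purely algebraic: $\widetilde{\Pm}\Vm=\mathbf{I}$ whenever $\Vm^\transpose\Mm\Vm$ is invertible, regardless of quadrature degree.
\item For data outside that span, an $L_2$ projection does not commute with $\divXI$. An exactly divergence-free $\adjJ^{(\iel)}$ then generally gives a projected metric whose closed-surface sum over $\partial E_\ielFV$ is of the size of the projection error, not zero.
\item Computing the collocated metrics in curl form does not help unless the result lies in $\mathrm{span}\{\varphi\}$.
\end{itemize}
What your Step~3 actually needs are the following hypotheses:
\begin{itemize}
\item The nodal $\adjJ^{(\iel)}$ interpolates a divergence-free field in $\mathrm{span}\{\varphi\}$, e.g.\ $2(\ppngeo-1)\le\ppn$ for PKD on curved tetrahedra, or curl-form metrics in the tensor-product space on hexahedra.
\item The subcell face quadrature is exact on that span.
\item Interior subcell faces receive their metrics from the same projection; \Cref{lem:subcell_area} only defines them on faces lying on a DG face.
\item At a hybrid face, the subcell metric obtained from the FV element via \Cref{lem:subcell_area} (used in your Step~3) coincides with the one obtained from the DG neighbour via \Cref{lem:flux_coupling} (used in your Step~2).
\end{itemize}
Stated as assumptions, which is what the paper compresses into ``metric closure property'', these make your argument complete and more detailed than the paper's. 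As written, the degree-$2\ppn$ route will not produce the closure.
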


\begin{proof}
  We assume a uniform constant state background field $\cons = \mathbf{C}$.
  Under the assumption of a constant state $\cons = \mathbf{C}$, the physical gradients vanish identically, causing the auxiliary lifted viscous gradients to disappear ($\mathbf{g} = \Null$).
  Free-stream preservation is verified by substituting the conservatively coupled numerical flux from~\cref{eq:fvdg_coupling_flux} into the semi-discrete DG surface operator
  \begin{align}
    \Mm \diagJref \diagJ^{(\iel)} \cons^{(\iel)}_t = - \sum_{\ifa=1}^{\Nf} \Vm_\ifa^\transpose \Wm \diagJref_\ifa \PFVtoDG_\ifaFV \left( \diagJ_\ifaFV^{(\ielFV)} f^\ast\left(\PDGtoFV_\ifaFV \mathbf{C}, \mathbf{C}; \normalvec_\ifaFV\right) \right).
  \end{align}
  Since the discrete $L_2$-projection operator is exact for constant states, it holds $\PDGtoFV_\ifaFV \mathbf{C} = \mathbf{C}$, and the consistency of the numerical flux function yields
  \begin{align}
    f^\ast(\mathbf{C}, \mathbf{C}; \normalvec_\ifaFV) = \mathbf{F}_c(\mathbf{C}) \cdot \normalvec_\ifaFV.
  \end{align}
  An identical substitution is applied to the $\ielFV$-th FV subcell operators.
  For internal subcell faces, the consistency of the standard FV flux guarantees cancellation across the subcells.
  For the outer subcell faces, substituting the projected state into the numerical flux results in
  \begin{align}
    f^\ast\left(\PDGtoFV_\ifaFV \cons_\ifa^{+},\cons_\ifaFV^{-};\normalvec_\ifaFV\right) = f^\ast\left(\mathbf{C},\mathbf{C};\normalvec_\ifaFV\right) = \mathbf{F}_c(\mathbf{C}) \cdot \normalvec_\ifaFV,
  \end{align}
  matching the internal flux state.
  Because the discrete operators satisfy both the metric identities and the metric closure property exactly, the geometric terms sum to zero, ensuring that $\cons_t = \mathbf{0}$.
  This completes the proof.
\end{proof}

\subsection{Troubled-cell Detection}
The final essential ingredient for maintaining stability in the presence of strong gradients is a mechanism to distinguish between smooth and non-smooth flow regions. The reliable detection of these "troubled cells" is critical for triggering the localized subcell limiting without compromising the high-order accuracy of the DGSEM. In this work, the troubled-cell indicator is given as
\begin{align}
  \mathcal{I}^{(\iel)} = \max\left( \mathcal{I}_{\text{jump}}(\uind^{(\iel)}), C \ (1-\mathcal{S}^{(\iel)})
  \right)
\end{align}
which is composed of a jump indicator, $\mathcal{I}_{\text{jump}}$, and a sanity check, $\mathcal{S}^{(\iel)}$, with $C \geq \mathcal{I}_{\text{upper}}$ being a large constant number.
If the indicator value is below the lower threshold $\mathcal{I} < \mathcal{I}_{\text{lower}}$, the FV scheme is switched to DG and vice
versa for $\mathcal{I} \geq \mathcal{I}_{\text{upper}}$, cf.~\cref{fig:fv_ind} (left).
The thresholds are bounded such that $\mathcal{I}_{\text{upper}} \geq \mathcal{I}_{\text{lower}} \geq 0$.
The indicator is evaluated based on the nodal DG solution in the element and on the facets, given as
\begin{align*}
  \uind^{(\iel)} = (\uind^{(\iel)}, \{\uind_\ifa \}_{\ifa \in \Nf} )  \in \mathcal{P}^{(\iel)} = \nq \cup \Nf,
\end{align*}
with $\uind$ being the vector of indicator variables, e.g., entropy, including pressure and density.

The positivity-preserving sanity indicator is defined as
\begin{align}
\mathcal{S}^{(\iel)} =
\begin{cases}
  1 &: \rho_i > \epsilon \wedge p_i > \epsilon \quad \forall \, i \in \mathcal{P}^{(\iel)}, \\
  0 &: \text{else},
\end{cases}
\end{align}
where $\epsilon = 10^{-16}$.
For DG cells satisfying this positivity constraint, an oscillation indicator inspired by~\cite{Jameson1981} is utilized, defined as
\begin{align}
  \mathcal{I}_{\text{jump}}(\uind) =
  \frac{1}{|\Omega^{\iel}|} \sum_{n \in \Omega^{(\iel)}}
  \frac{\abs{\check{q}_{\text{min},n} - 2 \check{q}_{n} + \check{q}_{\text{max},n}}}
       {\abs{\check{q}_{\text{min},n} + 2 \check{q}_{n} + \check{q}_{\text{max},n}} + \epsilon} \w_{n} \diagJref \diagJ_{n}^{(\iel)}
\end{align}
with $\w$ being the quadrature weights in the volume and $\check{q}_{\min,n}$ and $\check{q}_{\max,n}$ indicates the mimimum/maximum of $\check{q}_n$ for its three-point stencil along a coordinate line in the reference space, as illustrated in~\cref{fig:fv_ind}
The face values $\uind_\ifa$ are based on a distance-to-mean criterion, $\uind^- \in \Gamma^{(\ifa)}$ and $\uind^+ \in \Gamma^{(\ifa,\text{neighbor})}$, given as
\begin{align}
  \uind_\ifa =
   \begin{cases}
     \uind^- & : \ \abs{\uind^- - \bar{\uind}^{(\iel)}} > \abs{\uind^+ - \bar{\uind}^{(\iel)}}, \\
     \uind^+ & : \ \text{else},
   \end{cases}
 \end{align}
 with the element-local mean $\bar{\uind}^{(\iel)}$.

 \begin{figure}
   \centering
   \begin{subfigure}[c]{0.55\linewidth}
     \includegraphics[width=\textwidth]{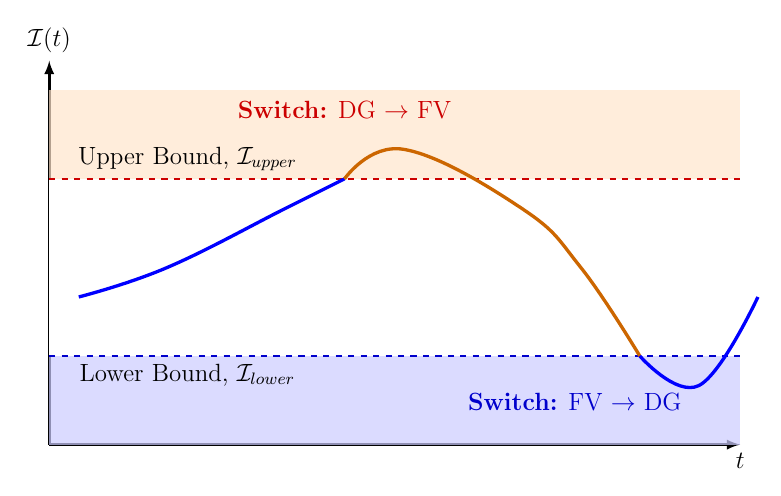}
   \end{subfigure}
   \hfill
   \begin{subfigure}[c]{0.35\linewidth}
     \includegraphics[width=\linewidth]{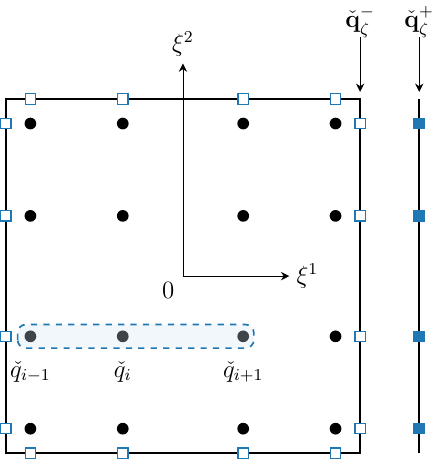}
   \end{subfigure}
   \caption{Left: Exemplary sketch of the switching procedure based on the indicator value over time. Right: Stencil used for the jump indicator.}
   \label{fig:fv_ind}
\end{figure}

\section{Numerical Experiments}
\label{sec:validation}

In the following, the proposed scheme is validated through a series of numerical experiments.
The study begins with the Sedov blast problem to assess the scheme's shock-capturing robustness and verify conservation, followed by a method of manufactured solutions to verify formal convergence rates on curvilinear grids.
The accuracy of the viscous flux discretization and the consistency of the boundary conditions is validated using a lid-driven
cavity.
Finally, the method's capability to handle complex, real-world engineering geometries is demonstrated through the simulation of transonic flow around a NACA 0012 airfoil on a hybrid curved discretization.
A thorough validation of the DGSEM on hybrid, curvilinear meshes can be found in~\cite{Keim2026}.
All following quantities are non-dimensional.
If not state otherwise, $\FVppn = 2\ppn+1$ and a fourth-order low storage Runge--Kutta method \cite{Carpenter1994} are utilized.

The numerical methods presented in this paper are implemented in the open-source high-order solver
FLEXI\footnote{\url{https://github.com/flexi-framework/flexi}}~\cite{Krais2019,Kempf2024}.
All curvilinear meshes are generated using the high-order pre-processor PyHOPE~\cite{Kopper2025pyhope}.

\subsection{Conservation for the 2D and 3D Sedov Blast Problem}
The conservation properties of the scheme are evaluated for the 2D and 3D Sedov blast wave, see, e.g.,~\cite{Zhang2010}, with the initial
conditions given as
\begin{align*}
  \arr{\rho, \rho \vel, \rho e} =
  \begin{cases}
    1, \mathbf{0}, 0.244816 / (\Delta x_{\FV}^d) &: x_i \leq \Delta x_{\FV}, \\
    1, \mathbf{0}, 10^{-12} &: \text{else},
  \end{cases}
\end{align*}
with $\Delta x_{\FV}$ being the average grid size of the FV subcells in each direction.
The solution is advanced until $t=1$ by a low-storage third-order accurate Runge--Kutta scheme with three stages and $\text{CFL} =
0.9$ on a computational domain $\Omega \in [-1.1,1.1]^d$ using $\ppn=4$.
The Rusanov flux function is chosen as a numerical flux.
The lower and upper thresholds of the jump indicator are chosen as $\mathcal{I}_{\text{lower}} = 0.025$ and
$\mathcal{I}_{\text{upper}} = 0.030$, respectively.
The mesh is constructed by initiating the process with a $50^2$ quadrilateral element grid in 2D and a $20^3$ hexahedral element
grid in 3D.
For the former, each quadrilateral element is either maintained as a single quad or divided into two triangles, while for the
latter, each hexahedral element is either maintained as a single hexahedron (\texttt{HEXA}) or divided into two prisms (\texttt{PRIS}), six pyramids (\texttt{PYRA}), or six tetrahedrons
(\texttt{TETRA}), see~\cite{Keim2026}.
In order to circumvent the generation of mixed element interfaces, a pyramid may, if necessary, be divided into two tetrahedral elements.
For the purpose of generating a more challenging test case, the mesh is sinusoidally deformed by perturbing the node positions as follows
\begin{align*}
  \mathrm{2D}:& \ \pos = \left[ \begin{matrix}
  \pos[1] + \epsilon L
  \cos\lr{\nicefrac{\pi}{2L}(\pos[1]-\nicefrac{1}{2})}\cos\lr{\nicefrac{3\pi}{2L}(\pos[2]-\nicefrac{1}{2})}\\
  \pos[2] + \epsilon L
  \sin\lr{\nicefrac{2\pi}{L}(\pos[1]-\nicefrac{1}{2})}\cos\lr{\nicefrac{\pi}{2L}(\pos[2]-\nicefrac{1}{2})}\\
\end{matrix} \right],\\[2ex]
  \mathrm{3D}:& \ \pos = \left[ \begin{matrix}
      \pos[1] + \epsilon L
      \cos\lr{\nicefrac{\pi}{2L}(\pos[1]-\nicefrac{1}{2})}\sin\lr{\nicefrac{2\pi}{L}(\pos[2]-\nicefrac{1}{2})}\cos\lr{\nicefrac{\pi}{2L}(\pos[3]-\nicefrac{1}{2})}\\
      \pos[2] + \epsilon L
      \cos\lr{\nicefrac{3\pi}{2L}(\pos[1]-\nicefrac{1}{2})}\cos\lr{\nicefrac{\pi}{2L}(\pos[2]-\nicefrac{1}{2})}\cos\lr{\nicefrac{\pi}{2L}(\pos[3]-\nicefrac{1}{2})}\\
      \pos[3] + \epsilon L
      \cos\lr{\nicefrac{\pi}{2L}(\pos[1]-\nicefrac{1}{2})}\cos\lr{\nicefrac{\pi}{L}(\pos[2]-\nicefrac{1}{2})}\cos\lr{\nicefrac{\pi}{2L}(\pos[3]-\nicefrac{1}{2})}
  \end{matrix} \right]
\end{align*}
where $L=1$ denotes the length of the domain and $\epsilon$ chosen as $\epsilon=0.15$, and approximated using $\ppngeo=2$.
The subsequent numerical test cases will employ the same procedure for mesh generation.
The density field with the FV subcell distribution is shown in~\cref{fig:validation:sedov} (left) for the 2D configuration and in~\cref{fig:validation:sedov3D} for the 3D configuration.
The errors shown in~\cref{tab:validation:conservation} and defined as
\begin{align*}
  err(\cons) = \max_{t\in T} \left( \int_\Omega \cons(t) d\mathbf{x} - \int_\Omega \cons(t=0) d\mathbf{x} \right),
\end{align*}
demonstrate that the scheme is conservative for the density and the total energy and preserves the symmetry of the test case given the curvature of the mesh, indicated by errors in the integral momentum in the order of machine precision.
The computational results are comparable to the reference data, as illustrated in~\cref{fig:validation:sedov} (right).
The 3D solution is more dissipative then its 2D counterpart due to the lower number of elements used.

\begin{table}
  \centering
  \begin{tabular}{c|ccccc}
& $\rho$ & $\rho \vel[1]$ & $\rho \vel[2]$ & $\rho \vel[3]$ & $\rho e$   \\ \hline
    $d=2$ & 1.180866E-12 & 6.388728E-15 & 5.739526E-14 & 0.000000E+00 & 1.306371E-14  \\ \hline
    $d=3$ & 4.381139E-12 & 7.038317E-15 & 6.036931E-14 & 6.036036E-14 & 1.687538E-14 \\ \hline
  \end{tabular}
  \caption{Conservation and symmetry error of $err(\cons)$ for a 2D and 3D Sedov blast wave after $t=1$.}
  \label{tab:validation:conservation}
\end{table}

\begin{figure}[htbp!]
  \centering
  \includegraphics[width=\textwidth]{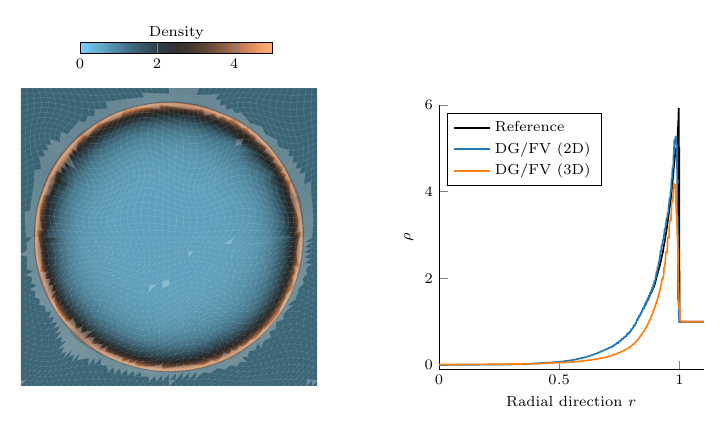}
  \caption{2D Sedov blast at $t=1$ on a curvilinear hybrid mesh using $\ppn=4$ and $\FVppn=2\ppn+1$. Left: Density field
  with the FV subcells highlighted by the subdivided mesh cells. The FV subcell solution is visualized by integral means. Right: Density profile along the radial direction for the 2D and 3D
Sedov blast wave compared to the reference solution, see, e.g.,~\cite{Zhang2010}. The 3D solution is more dissipative due to the lower number of elements used.}
  \label{fig:validation:sedov}
\end{figure}

\begin{figure}[htbp!]
  \centering
  \includegraphics[width=\textwidth]{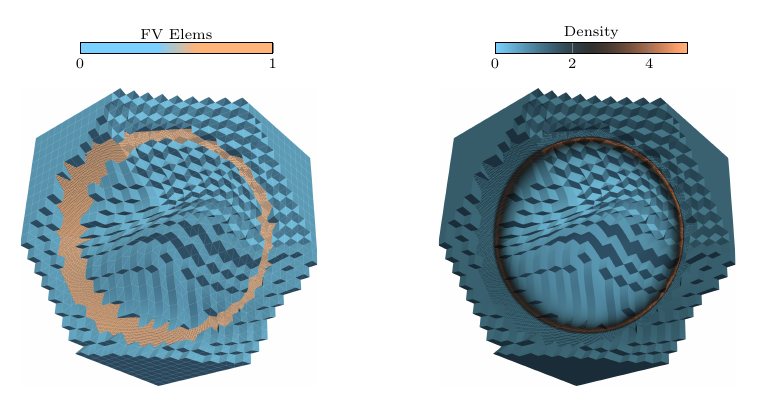}
  \caption{Cutout of the 3D Sedov blast at $t=1$ on a curvilinear hybrid mesh (\texttt{HEXA},\texttt{PRIS},\texttt{PYRA}, and \texttt{TETR}) using
    $\ppn=4$ and $\FVppn = 2\ppn+1$. Left: FV subcells amount. The amount of FV subcells on
  the left looks more due to the cutting and the individually visualized FV subcells. Right: Density field. The FV subcell solution
is represented by integral means.}
  \label{fig:validation:sedov3D}
\end{figure}

\subsection{Experimental Order of Convergence}
The $h$-convergence of the hybrid DG/FV subcell schemes is discussed using the method of manufactured solutions,
cf.~\cite{Gassner2009}. Further details on the $p$-convergence properties of the DGSEM are given in~\cite{Keim2026}.
Following~\cite{Hindenlang2012}, the exact solution is assumed to be of the form
\begin{align}
  \rho = 2+0.1\sin(2\pi(\pos[1]+\pos[2]+\pos[3]-t)), \ \rho \vel = \rho, \ \rho e = (\rho)^2.
\end{align}
The computational domain $\Omega \in [0,1]^d$ is discretized using $4^d$ to $32^d$ elements and $\ppn=2$.
The mesh generation and deformation is similar as for the conservation test.
The timestep is chosen sufficiently small as to have no influence on the discretization error.
A checkerboard indicator and Roe's numerical flux function are utilized.
The chosen error norm is the discrete $L_2$ error of the density.
The results in~\cref{fig:validation:conv} highlight the spatial convergence properties of the DGSE and 2nd-order FV operator for 2D
(dashed line) and 3D (solid line), which both achieve the expected order of convergence under $h$-refinement.
The more then second-order convergence in the 2D case can be attributed to the highly disturbed mesh.

\begin{figure}
  \begin{subfigure}{0.45\linewidth}
    \includegraphics[width=\textwidth]{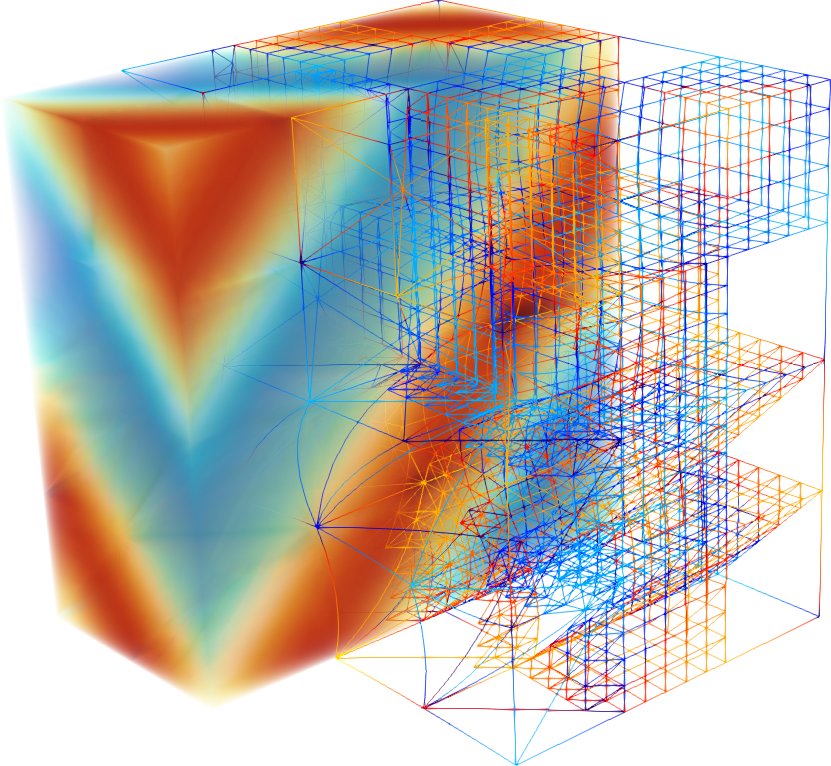}
  \end{subfigure}
  \hfill
  \begin{subfigure}{0.45\linewidth}
    \includegraphics[width=\textwidth]{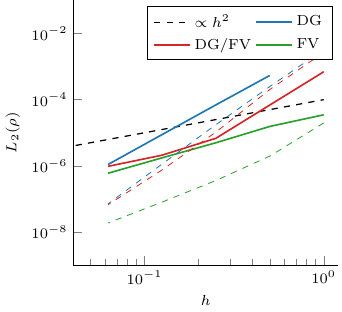}
  \end{subfigure}
  \caption{Left: Visualization of the mesh and density field of the 3D convergence test. Subdivided elements are FV subcells, otherwise
  DG. Right: $h$-convergence for $\ppn=2$ and $\FVppn = 2\ppn+1$ for 2D and 3D. The colored solid lines are the 3D results
and the 2D results are indicated by the colored dashed lines.}
  \label{fig:validation:conv}
\end{figure}

\subsection{Lid-Driven Cavity in 2D and 3D}

The lid-driven cavity is a common benchmark to validate the implementation of the viscous fluxes including boundary conditions.
A Reynolds number of $Re=100$ is chosen based on the initial state $\arr{\rho, \vel, p} = \arr{1,\mathbf{0},71.42857}$, from which
the dynamic viscosity can be retrieved.
This configuration is selected primarily for code validation purposes, as it allows for a fast verification of the viscous
operators.
For all tests, the computational domain $\Omega \in [0,1]^d$ is discretized by $40^2$ elements in 2D and $20^3$ elements in 3D with with $\ppn=3$.
The mesh is build and curved similar to above.
On the left, right, and bottom, stationary, adiabatic, no-slip wall boundary conditions are applied.
On the top, a wall moving with velocity $\vel=\arr{1,0,0}$ is utilized.
In the case of $d=3$, stationary, adiabatic, no-slip wall boundary conditions are used in the $z$-direction.
A checkerboard indicator and Rusanov's numerical flux function are utilized.
The results for the lid-driven cavity at $t=5$ are depicted in~\cref{fig:validation:cavity}, demonstrating that for the chosen grid
resolution the numerical results are in reasonable agreement to the reference data~\cite{Agarwal1981}.
The small oscillations in the velocity profile can be attributed to the disturbed curvilinear mesh.

\begin{figure}[htbp!]
  \centering
  \includegraphics[width=\textwidth]{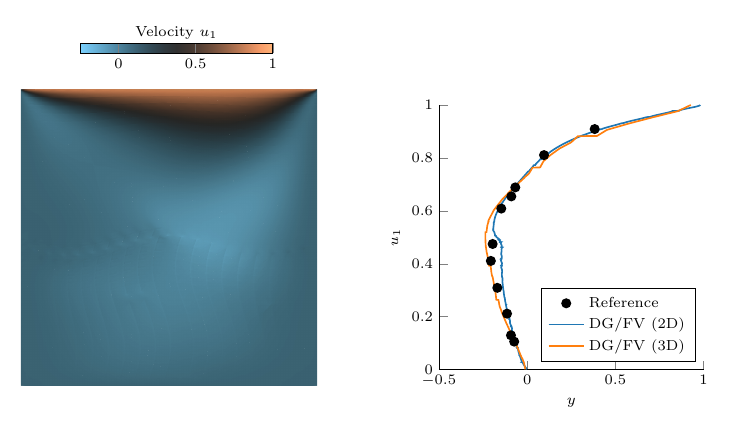}
  \caption{Lid-driven cavity at $Re=100$ and $t=5$. Left: Visualization of a slice in the $xy$-plane of the velocity in $x$-direction at
  $z=0.5$. Right: Velocity in $x$-direction plotted along the $y$-direction at $z=x=0.5$ compared to the reference data~\cite{Agarwal1981}.}
  \label{fig:validation:cavity}
\end{figure}

\subsection{Transonic Flow Past the NACA 0012}

The applicability of the proposed scheme to more complex problems is demonstrated using the flow around the NACA 0012
airfoil.
The initial conditions are chosen as $\rho = 1.0, \ \vel = \arr{0.998, 0.061, 0.0}, \ p=1.340$ (non-dimensional), resulting in a Mach number of $M=0.73$.
The computational domain is discretized using \num{1168} curvilinear prisms and \num{12032} curvilinear hexahedrons with $\ppngeo=2$.
Farfield boundary conditions are employed at the outer boundaries of the computational domain to simulate the undisturbed ambient
environment, without perturbing the viscid flow problem.
Periodic boundary conditions are applied in spanwise direction and adiabatic, slip conditions are prescribed on the airfoil
surface.
A Roe-type dissipation at the element faces are utilized.
The lower and upper thresholds of the jump indicator are chosen as $\mathcal{I}_{\text{lower}} = 0.005$ and
$\mathcal{I}_{\text{upper}} = 0.010$, respectively.
A slice of the second-order unstructured grid together with the instantaneous distribution of the Mach number at
$t=12$ using $\ppn=4$ are depicted in~\cref{fig:naca0012}.

\begin{figure}
  \includegraphics[width=\textwidth]{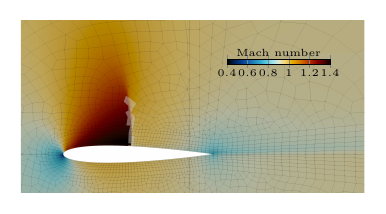}
  \caption{NACA 0012 airfoil. Instantaneous distribution of Mach number at $t=12$ with the second-order curvilinear unstructured
  volume grid and the FV subcells highlighted in white.}
  \label{fig:naca0012}
\end{figure}

\section{Conclusion}
\label{sec:conclusion}

In this work, we presented a robust, efficient, and provably conservative shock-capturing approach for DGSEM on mixed, curvilinear meshes based on a 2nd-order $h$-adaptive FV subcell scheme.
While prior subcell limiting frameworks offer conservation on uniform meshes, our approach extends the $h$-adaptive subcell strategy to curvilinear hybrid meshes, including hexahedra, prisms, tetrahedra, and pyramids, and provides the analytical and numerical proof of its discrete conservation.
The scheme demonstrates excellent stability and convergence across diverse test cases without the sub-element resolution loss typical of artificial viscosity.
Ultimately, this methodology offers a robust pathway for the high-fidelity analysis of shock-laden turbulent flows in realistic engineering geometries, seamlessly balancing geometric flexibility with numerical stability and efficiency.

\begin{acknowledgement}
This work was funded by the European Union and has received funding from the European High Performance Computing Joint Undertaking (JU) and Sweden, Germany, Spain, Greece, and Denmark under grant agreement No 101093393.
The research presented in this paper was funded in parts by Deutsche Forschungsgemeinschaft (DFG, German Research
Foundation) through SPP 2410 Hyperbolic Balance Laws in Fluid Mechanics: Complexity, Scales, Randomness (CoScaRa) and by the state of Baden-Württemberg under the project Aerospace 2050 MWK32-7531-49/13/7 "QUASAR".
\end{acknowledgement}
\ethics{Competing Interests}{The authors have no conflicts of interest to declare that are relevant to the content of this chapter.}

\eject

\bibliographystyle{spmpsci}
\bibliography{references}
\end{document}